\documentclass[a4paper,12pt]{article}

\usepackage[latin1]{inputenc}
\usepackage[swedish,english]{babel}
\usepackage{amsmath,amsthm}
\usepackage{amssymb}
\usepackage{graphicx}
\usepackage{epstopdf}
\usepackage{amsmath}
\usepackage{amssymb}
\usepackage{subfig}
\usepackage{booktabs}   
\usepackage{multirow}


\theoremstyle{plain}
\newtheorem{theorem}{Theorem}
\newtheorem{lemma}{Lemma}

\theoremstyle{definition}

\newtheorem{definition}{Definition}

\theoremstyle{remark}

\newcommand{\vertiii}[1]{{\left\vert\kern-0.25ex\left\vert\kern-0.25ex\left\vert #1 
    \right\vert\kern-0.25ex\right\vert\kern-0.25ex\right\vert}}
\newcommand{\bs}{\boldsymbol}

\title{An improved high order finite difference method for non--conforming grid interfaces for the wave equation} 

\author{Siyang Wang\thanks{Department of Information Technology, Uppsala University, 752 37 Uppsala, Sweden. \emph{Current address:} Department of Mathematical Sciences, Chalmers University of Technology and University of Gothenburg,
412 96 Gothenburg, Sweden. Email: siyang.wang@chalmers.se}}

\date{}


\begin{document}

\selectlanguage{english}

\maketitle

\begin{abstract} 
This paper presents an extension of a recently developed high order finite difference method for the wave equation on a grid with non--conforming interfaces. The stability proof of the existing methods relies on the interpolation operators being \emph{norm--contracting}, which is satisfied by the second and fourth order operators,  but not by the sixth order operator. We construct new penalty terms to impose interface conditions such that the stability proof does not require the \emph{norm--contracting} condition. As a consequence, the sixth order accurate scheme is also provably stable. Numerical experiments demonstrate the improved stability and accuracy property. 

\end{abstract}

\section{Introduction}
Wave propagation can be modeled by hyperbolic partial differential equations (PDEs). When solving a hyperbolic PDE by a finite difference method, to achieve a certain accuracy a minimum number of grid points per wavelength is required. This number is smaller with a high order method than with a low order method, which makes high order finite difference methods more efficient  to solve wave propagation problems on smooth domains, see the pioneering paper \cite{Kreiss1972} for first order hyperbolic PDEs, and the recent work \cite{Hagstrom2012} for second order hyperbolic PDEs.

On a uniform grid, high order central finite difference stencils are easily constructed by Taylor expansions \cite{Fornberg1998}. Close to boundaries, one--sided stencils can be used. It is important that the boundary closure is accurate, and the numerical scheme is stable. One successful approach is to use a finite difference operator satisfying a summation--by--parts (SBP) property \cite{Kreiss1974}.  When an SBP operator acts on a grid function, it mimics the integration--by--parts principle in the continuous setting. An energy estimate can be obtained if boundary conditions are imposed appropriately, for example by using the simultaneous--approximation--term (SAT) method \cite{Carpenter1994} or ghost points \cite{Petersson2010}. A scheme satisfying an energy estimate is called energy stable \cite{Gustafsson2008,Gustafsson2013}.

Finite difference methods in the basic form can only be used on box--shaped domains. When complex geometry is present, the domain can be partitioned into blocks to resolve the geometrical feature. Each block has four sides and is mapped to a reference domain. If the corners of adjacent blocks meet, we say they are conforming blocks; otherwise they are non--conforming. In addition, a grid interface is conforming if no hanging nodes are present. When partitioning a domain, we can always make the blocks and interfaces conforming. However, in many situations it is desirable to use a more flexible strategy of partition that leads to non--conforming blocks and grid interfaces.   

As an example, we consider a wave traveling in a heterogeneous medium with the wave speed varying in space. The wavelength is proportional to the wave speed for a given frequency. For accuracy the grid spacing is determined by the shortest wavelength. If a uniform grid is used in the entire computational domain, then the grid spacing must be small enough to resolve the shortest wavelength, resulting in an unnecessarily fine grid elsewhere. It is then more efficient to construct a grid according to the wavelength in each block, which leads to non--conforming interfaces with hanging nodes. 

If only conforming blocks are used, the domain partitioning may end up with many blocks of small size. To use a high order finite difference method, a minimum number of grid points is required in each block due to the stencil width. This then results in unnecessarily many grid points in the small blocks, and consequently a suboptimal performance of the numerical scheme. In such a situation, non--conforming blocks are more appropriate.   

In an SBP finite difference method, interface conditions can also be imposed by the SAT method \cite{Carpenter1999,Carpenter2010} or ghost points \cite{Petersson2010}. In the SBP--SAT framework, wave propagation in a heterogeneous medium with complex geometry is considered in \cite{Virta2014}. A stable and accurate multi--block finite difference method with conforming grid interfaces and blocks is presented. The focus in \cite{Wang2016} is the numerical treatment of non--conforming interfaces and blocks by using SBP--preserving interpolation operators \cite{Kozdon2016,Mattsson2010}. Energy stability is proved with an assumption that the interpolation  operators are \emph{norm--contracting}. In the same paper \cite{Wang2016} , it is verified that not all interpolation operators satisfy this assumption, and instability occurs when the sixth order method is used on a domain with non--conforming, curved interfaces. 

In this paper, we construct new penalty terms in the SBP--SAT finite difference framework for the numerical interface treatment. The resulting scheme is energy stable even when the interpolation operators are not  \emph{norm--contracting}. This extends the provably stable scheme from fourth order accuracy \cite{Wang2016} to sixth order accuracy. The technique can be potentially used to construct even higher order schemes, provided that the corresponding SBP operators exist. Another contribution of this paper is the numerical treatment of non--conforming blocks and interfaces on curvilinear grids, where as in \cite{Wang2016} such a case is studied on Cartesian grids. We  also conduct numerical experiments to verify that the new sixth order scheme is stable with non--conforming, curved interfaces. 

The paper is organized as follows. In Section 2, we introduce the SBP--SAT finite difference method. In Section 3, we consider the wave equation on a Cartesian grid and present the new penalty terms for numerical interface treatments. Stability is proved by the energy method. We then generalize the scheme to non--conforming blocks and grid interfaces on curvilinear grids in Section 4. Numerical experiments are presented in Section 5 to verify the stability and accuracy property of the developed scheme. We draw conclusion in Section 6.

\section{SBP--SAT finite difference methods}
Finite difference operators satisfying an SBP property have been widely used to discretize time dependent PDEs. An SBP operator has central finite difference stencils in the interior, and special one--sided stencils at a few grid points near boundaries. The boundary stencils are chosen so that the operator satisfies a summation--by--parts property, which is the discrete counterpart of the integration--by--parts principle. With the SAT method imposing boundary and interface conditions, the SBP--SAT finite difference method possesses a great advantage: it is possible to prove energy stability for high order accurate schemes for  initial--boundary--value problems. 

To introduce the SBP--SAT finite difference method, we consider the one dimensional domain $[0,1]$ discretized by the grid points $x_j=jh, j=0,1,\cdots,N$ with a constant grid spacing $h=1/N$. We use the capital letter, for example, $U$, to denote a smooth function in $[0,1]$, and the corresponding small letter, $u$, to denote its values on the grid $u=[U(x_0),U(x_1),\cdots,U(x_N)]^T$. 

\subsection{Definitions of SBP operators}
The SBP concept and the first derivative SBP operator $D_1\approx\partial/\partial x$ are introduced in \cite{Kreiss1974}, and later refined in \cite{Strand1994}. Formally it is defined as follows.
\begin{definition}
A difference operator $D_1=H^{-1}Q$ approximating $\partial/\partial x$ is a diagonal norm first derivative SBP operator if $H$ is diagonal positive definite and $Q+Q^T=\text{diag}(-1,0,\cdots,0,1)$.
\end{definition}
The operator $H$ defines the SBP norm, and leads to the identity
\begin{equation}\label{D1_eq}
u^THD_1v=-(D_1u)^THv-u_0v_0+u_Nv_N,
\end{equation}
which is the discrete analogue of the integration--by--parts formula
\begin{equation*}
\int_0^1 UV_xdx=-\int_0^1 U_xVdx-U(0)V(0)+U(1)V(1),
\end{equation*}
since the norm $H$ is also a quadrature \cite{Fernandez2014b,Hicken2013}.

For the second derivative, we distinguish between a constant coefficient operator $D_2\approx\partial^2/\partial x^2$ and a variable coefficient operator $D_2^{(b)}\approx\partial/\partial x (b(x)\partial/\partial x)$ with a known function $b(x)>0$.
\begin{definition}\label{D2}
A difference operator $D_2=H^{-1}(-M+BS)$ approximating $\partial^2/\partial x^2$ is a diagonal norm second derivative SBP operator if $H$ is diagonal positive definite, $M$ is symmetric positive semi--definite, $B=\text{diag}(-1,0,\cdots,0,1)$, and the first and last row of $S$ approximate $\partial/\partial x$ at the two boundaries, respectively.
\end{definition}
Such an operator is constructed in \cite{Mattsson2004}. It is later found in  \cite{Appelo2007,Mattsson2008} that the operator $M$ in $D_2$ satisfies the following property.\begin{lemma}\label{lemmaM}
The symmetric positive semi--definite operator $M$ can be written as 
\begin{equation*}
M=\widetilde M + h\theta (BS)^TBS,
\end{equation*}
where $\widetilde M$ is also symmetric positive semi--definite, $\theta>0$ is a constant independent of $h$, $B$ and $S$ are the same as in Definition \ref{D2}.
\end{lemma}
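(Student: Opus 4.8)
\emph{Proof proposal.} The plan is to recast the claimed decomposition as a single positive semidefiniteness bound and then strip away the $h$--dependence, so that only a fixed, $h$--independent inequality remains. Because $B=\text{diag}(-1,0,\dots,0,1)$, the matrix $BS$ has only its first and last rows nonzero; writing $s_0^T$ and $s_N^T$ for these two rows one gets
\[
(BS)^TBS=s_0s_0^T+s_Ns_N^T\succeq0,\qquad u^T(BS)^TBS\,u=(Su)_0^2+(Su)_N^2 .
\]
Hence the assertion $M=\widetilde M+h\theta(BS)^TBS$ with $\widetilde M\succeq0$ is equivalent to $M-h\theta(BS)^TBS\succeq0$, i.e.\ to $u^TMu\ge h\theta\bigl[(Su)_0^2+(Su)_N^2\bigr]$ for every grid function $u$. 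Since $D_2=H^{-1}(-M+BS)$ is built from fixed stencil coefficients scaled by powers of $h$, one may write $H=h\widehat H$, $S=h^{-1}\widehat S$, $M=h^{-1}\widehat M$ with $\widehat H,\widehat S,\widehat M$ independent of $h$, and the bound becomes the $h$--free relation
\[
\widehat M-\theta\,(B\widehat S)^TB\widehat S\succeq0 .
\]

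The next step is the elementary observation that for symmetric $\widehat M\succeq0$ and $\widehat P\succeq0$ with $\ker\widehat M\subseteq\ker\widehat P$ there is a largest $\theta^{\star}>0$ for which $\widehat M-\theta^{\star}\widehat P\succeq0$: decomposing $u=u_0+u_\perp$ along $\ker\widehat M\oplus(\ker\widehat M)^{\perp}$ one has $u^T\widehat Mu=u_\perp^T\widehat Mu_\perp$ and, since $\widehat Pu_0=0$, also $u^T\widehat Pu=u_\perp^T\widehat Pu_\perp$, so it suffices to dominate $\widehat P$ by the \emph{positive definite} form $\widehat M$ on $(\ker\widehat M)^{\perp}$, and the optimal $\theta^{\star}$ is the reciprocal of the largest generalized eigenvalue of the pencil $(\widehat P,\widehat M)$ on that subspace. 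I would apply this with $\widehat P=(B\widehat S)^TB\widehat S$. Because $\widehat P$ is supported in the two fixed corner blocks while $\widehat M$ coincides with the usual (positive semidefinite Toeplitz) second--difference stencil in the interior, the relevant generalized eigenvalue stays bounded as the grid is refined, so $\theta^{\star}$ is bounded below by a positive constant independent of $h=1/N$. Taking any $\theta\in(0,\min(\theta^{\star}_{\mathrm{left}},\theta^{\star}_{\mathrm{right}})]$, the matrix $\widetilde M:=M-h\theta(BS)^TBS$ is symmetric and positive semidefinite by construction, which is the claim.

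The one substantive point — and the step I expect to be the real obstacle — is the kernel inclusion $\ker\widehat M\subseteq\ker\bigl((B\widehat S)^TB\widehat S\bigr)$, equivalently that every null vector of $M$ is annihilated by the two boundary derivative rows of $S$. This rests on the property of the second--derivative SBP operators of \cite{Mattsson2004} that $\ker M$ is exactly the one--dimensional span of the constant grid function $\mathbf 1$. That $\mathbf 1\in\ker M$ is immediate from consistency: $D_2\mathbf 1=0$, and the boundary rows of $S$ annihilate constants, so $BS\mathbf 1=0$ and hence $M\mathbf 1=BS\mathbf 1-HD_2\mathbf 1=0$. That there is no further null direction — already $Mx=BSx$, whose first and last entries are $-1$ and $1$, excludes linear functions — I would verify directly from the explicit coefficient tables for each order $p=2,4,6$. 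Since $(S\mathbf 1)_0=(S\mathbf 1)_N=0$ for the same reason, $\mathbf 1\in\ker(BS)$, the inclusion holds, and the argument closes. Equivalently, this last point is precisely the ``borrowing'' property of $M$ proved in \cite{Appelo2007,Mattsson2008}, which may simply be invoked in place of the kernel computation.
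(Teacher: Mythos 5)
The paper does not actually prove this lemma itself: it is quoted from \cite{Appelo2007,Mattsson2008}, where it is established by explicit computation with the coefficients of each operator. So the only question is whether your argument stands on its own, and it does not quite. Your reduction is fine: after scaling out $h$ the claim becomes $\widehat M-\theta(B\widehat S)^TB\widehat S\succeq0$ for a $\theta>0$ independent of $N$, i.e.\ that the largest generalized eigenvalue of the pencil $\bigl((B\widehat S)^TB\widehat S,\widehat M\bigr)$ on $(\ker\widehat M)^\perp$ is bounded uniformly in $N$. But that uniform bound is the entire content of the lemma, and you assert it with a single heuristic sentence (``the relevant generalized eigenvalue stays bounded as the grid is refined''). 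The ``elementary observation'' you offer in its place --- dominating $\widehat P$ by the positive definite restriction of $\widehat M$ to $(\ker\widehat M)^\perp$ --- gives, if applied literally through the smallest nonzero eigenvalue, a constant $\theta^{\star}=O(N^{-2})$, because the smallest nonzero eigenvalue of the unscaled discrete-Laplacian-type form $\widehat M$ degenerates like $N^{-2}$; that is far too weak to yield the $h$-independent $\theta$ claimed.

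To close the gap one must localize: since $B\widehat Su$ depends only on a fixed number $r$ of entries of $u$ at each end and annihilates constants, it suffices to show $u^T\widehat Mu\ge c\sum_{j<r}(u_{j+1}-u_j)^2$ (and the mirror sum at the other end) with $c$ independent of $N$, i.e.\ that $\widehat M$ dominates a \emph{local} Dirichlet form near each boundary. This is true for the operators of \cite{Mattsson2004}, but it is not a consequence of Definition~\ref{D2} together with consistency: one can perturb the boundary block of a valid $M$ so that it stays symmetric positive semi-definite with $\ker M=\mathrm{span}\{\mathbf 1\}$ yet acquires an $O(\epsilon)$ eigenvalue in a direction not annihilated by $BS$, which destroys any uniform $\theta$. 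The decisive step is therefore unavoidably a finite check on the fixed-size boundary blocks of each operator ($p=1,2,3$), which is precisely what \cite{Appelo2007,Mattsson2008} carry out (producing explicit numerical values of $\theta$), and which your proposal does not perform. Your closing suggestion that the ``borrowing property \dots may simply be invoked'' would make the argument circular, since that property \emph{is} the lemma. The kernel discussion, by contrast, is correct, but it is the easy half.
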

Lemma \ref{lemmaM} is often referred to as the \emph{borrowing trick}, as we can \emph{borrow} from the positive semi--definite operator $M$ a small, mesh dependent amount, with the resulting operator $\widetilde M$ still positive semi--definite. This property is essential for energy stability of problems with interfaces or Dirichlet boundary conditions.

For the variable coefficient case we have correspondingly
\begin{definition}\label{D2b}
A difference operator $D_2^{(b)}=H^{-1}(-M^{(b)}+B^{(b)}S)$ approximating $\partial/\partial x (b(x)\partial/\partial x)$ is a diagonal norm second derivative variable coefficient SBP operator if $H$ is diagonal positive definite, $M^{(b)}$ is symmetric positive semi--definite, $B^{(b)}=\text{diag}(-b(x_0),0,\cdots,0,b(x_N))$, and the first and last row of $S$ approximate $\partial/\partial x$ at the two boundaries, respectively.
\end{definition}
Such an operator is constructed in \cite{Mattsson2010}, and the operator $M^{(b)}$ has the following two important properties \cite{Virta2014}.

\begin{lemma}\label{lemmaMb}
The symmetric positive semi--definite operator $M^{(b)}$ can be written as 
\begin{equation*}
M^{(b)}=\widetilde M^{(b)} + h\sigma b_m(BS)^TBS,
\end{equation*}
where $\widetilde M$ is also symmetric positive semi--definite, $\sigma>0$ is a constant independent of $h$, $B$ and $S$ are the same as in Definition \ref{D2}, and 
\begin{equation*}
b_m=\min (b(x_0), b(x_1),\cdots, b(x_l), b(x_N), b(x_{N-1}),\cdots,b(x_{N-l}))
\end{equation*}
with a constant $l$ independent of $h$.
\end{lemma}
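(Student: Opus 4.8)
The plan is to follow the template of the constant-coefficient borrowing identity (Lemma~\ref{lemmaM}): exhibit a block decomposition of $M^{(b)}$, reduce the assertion to a finite-dimensional, $h$-independent matrix inequality localised at each boundary, and verify that inequality once for each available operator order. Concretely, I would first recall from the construction in \cite{Mattsson2010} that $M^{(b)}$ is banded and \emph{local}: away from the two boundaries it is the symmetric variable-coefficient interior stencil for $-\partial_x(b\,\partial_x)$, and only the entries in two blocks of a fixed size $r\times r$ near $x_0$ and near $x_N$ (with $r$ independent of $h$) differ from that stencil; moreover every entry of $M^{(b)}$ is a linear function of the nodal values $b(x_0),b(x_1),\dots$. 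Writing $s_0^T$ and $s_N^T$ for the first and last rows of $S$, one has $(BS)^TBS=s_0s_0^T+s_Ns_N^T$, with $s_0$ supported on the first $l+1$ entries and $s_N$ on the last $l+1$. By the mirror symmetry of the operators under $x\mapsto 1-x$ it suffices to analyse the left boundary. The quadratic form $v^TM^{(b)}v$ splits into a left-block part, an interior part, and a right-block part, the interior part being nonnegative by itself (a positive, diagonal-$H$-weighted sum of squared undivided differences, up to the positive semi-definite remainder terms supplied by the construction). Hence the lemma follows once the left-block part is shown to dominate $h\sigma\,b_m^{L}(s_0^Tv)^2$ for some $\sigma>0$ independent of $h$, where $b_m^{L}=\min_{0\le j\le l}b(x_j)$, together with the mirror statement at the right boundary: summing the three pieces and using $b_m\le\min(b_m^{L},b_m^{R})$ and $s_0s_0^T,s_Ns_N^T\succeq0$ yields $M^{(b)}\succeq h\sigma\,b_m\,(BS)^TBS$, so that $\widetilde M^{(b)}:=M^{(b)}-h\sigma\,b_m\,(BS)^TBS\succeq0$.

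Next I would reduce the left-boundary estimate to a fixed matrix inequality. The left-block contribution to $v^TM^{(b)}v$ equals $v_L^T\big(\tfrac1h\sum_j b(x_j)\,a_j\big)v_L$, where $v_L=(v_0,\dots,v_{r-1})^T$, the sum runs over the $O(1)$ nodes that influence the block, and the $a_j$ are fixed symmetric $r\times r$ matrices (the $1/h$ reflecting the scaling $M^{(b)}\sim h^{-1}$). Since $M^{(b)}$ is symmetric positive semi-definite for \emph{every} strictly positive nodal-value vector, a limiting argument — take one $b(x_k)$ arbitrarily large with the others fixed and let $b(x_k)\to\infty$ — shows $a_j\succeq0$ for each $j$. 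Discarding the (nonnegative) terms with node index exceeding $l$ and bounding $b(x_j)\ge b_m^{L}$ on the remaining ones gives $\sum_j b(x_j)a_j\succeq b_m^{L}\sum_{j\le l}a_j=:b_m^{L}G$, where $G\succeq0$ is the fixed matrix obtained by setting $b\equiv1$ on the first $l+1$ nodes and $b\equiv0$ beyond. The left-boundary estimate then reduces to the single $h$-free inequality
\[
G\ \succeq\ \sigma\,(h s_0)(h s_0)^T ,
\]
since $hs_0$ is a vector of stencil coefficients independent of $h$, and multiplying through by $h$ turns $\tfrac1h G$ into $G$ and $h(s_0s_0^T)$ into $(hs_0)(hs_0)^T$.

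The remaining point is the crux. An inequality $G\succeq\sigma\,gg^T$ with $g=hs_0$ holds for some $\sigma>0$ precisely when $g\in\operatorname{range}(G)$, equivalently $\ker G\subseteq\ker g^T$; one then takes $\sigma$ to be the largest admissible value, computed once per operator order and manifestly independent of $h$. This kernel inclusion is exactly what forces the parameter $l$ in the definition of $b_m$: the degenerate matrix $G$ must involve enough near-boundary nodes that its null space lies in the span of the block constants, which $g^T=hs_0$ annihilates because the boundary-derivative row of $S$ is exact on constants. I expect this to be the main obstacle — one must verify the non-degeneracy of $G$ modulo constants and pin down the explicit $\sigma>0$ for each available order $2p=2,4,6$, exactly the role played by the constant $\theta$ in Lemma~\ref{lemmaM} — together with the bookkeeping that makes the left/interior/right splitting of $v^TM^{(b)}v$ compatible with the extraction of $b_m^{L}G$ from the left block. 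The other ingredients (linearity of $M^{(b)}$ in the nodal values of $b$, positive semi-definiteness of the individual $a_j$, and the mirror symmetry) follow directly from the construction and the limiting argument above.
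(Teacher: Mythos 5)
The paper does not actually prove Lemma~\ref{lemmaMb}: it is imported verbatim from \cite{Virta2014}, where the corresponding constants $\sigma$ and $l$ are obtained by an explicit, operator--by--operator computation. Your proposal is a sound reconstruction of that argument and follows essentially the same route: use the linearity of $M^{(b)}$ in the nodal values of $b$ (a feature of the construction in \cite{Mattsson2012,Mattsson2010} that you should cite rather than assume), observe that positive semi--definiteness of $M^{(b)}=\sum_j b_j M_j$ for \emph{all} positive nodal data forces each coefficient matrix $M_j$ to be positive semi--definite (your limiting argument works; sending the other coefficients to zero is even simpler), discard the interior and opposite--boundary terms, bound the retained coefficients below by $b_m$, and reduce to a fixed, $h$--independent matrix inequality $G\succeq\sigma\,(hs_0)(hs_0)^T$ at each boundary. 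Two remarks. First, the step where you combine the left and right boundary estimates is only valid because the index sets $\{0,\dots,l\}$ and $\{N-l,\dots,N\}$ are disjoint, so that $M^{(b)}\succeq\sum_{j\le l}b_jM_j+\sum_{j\ge N-l}b_jM_j$ holds as a single decomposition; you cannot add two separate lower bounds for $M^{(b)}$, and your write--up should make the disjointness explicit. Second, and more importantly, the range condition $hs_0\in\operatorname{range}(G)$ --- equivalently the non--degeneracy of $G$ modulo constants for a suitable finite $l$ --- is not a formality but the entire mathematical content of the lemma: it is not guaranteed a priori by the SBP axioms, it determines the admissible $l$, and it yields the explicit value of $\sigma$ (the analogue of $\theta$ in Lemma~\ref{lemmaM}) only after a concrete computation for each order $2p=2,4,6$. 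You flag this honestly, but as written the proposal is a correct \emph{reduction} to that finite check rather than a complete proof; completing it requires exactly the tabulated verifications carried out in \cite{Virta2014}.
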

Lemma \ref{lemmaMb} for the variable coefficient SBP operators is an analogue of Lemma \ref{lemmaM} for the constant coefficient case.  We note that $b_m$ is the smallest value of the variable coefficient $b(x)$ on the first and last $l$ grid points. The smaller $b_m$ is, the less we can \emph{borrow} from $M^{(b)}$.

\begin{lemma}\label{lemmaMc}
The SBP operator $D_2^{(b)}$ is compatible with $D_1$ if $M^{(b)}$ can be written as 
\begin{equation*}
M^{(b)}=D_1^T B^{(b)} H D_1 + R^{(b)},
\end{equation*}
where $R^{(b)}$ is symmetric positive semi--definite, and $B^{(b)}$ is the same as in Definition \ref{D2b}. 
\end{lemma}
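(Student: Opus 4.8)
The statement is an implication whose \emph{hypothesis} is the decomposition $M^{(b)} = D_1^T B^{(b)} H D_1 + R^{(b)}$ with $R^{(b)}$ symmetric positive semi--definite; this decomposition is therefore to be \emph{assumed}, not established, and in particular the plan is not to verify that the published operators admit such a splitting. Instead I would take the decomposition as given and deduce from it the compatibility of $D_2^{(b)}$ with $D_1$. Recall that compatibility asks that the volume part $-H^{-1}M^{(b)}$ of the second--derivative operator reproduce, in the discrete energy balance, the quadratic form built from the first--derivative operator $D_1$, modulo a term of definite sign; the argument is to show that the hypothesized splitting supplies exactly this structure.

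First I would write out the SBP identity obeyed by $D_2^{(b)}$. By Definition \ref{D2b}, $H D_2^{(b)} = -M^{(b)} + B^{(b)} S$, so for grid functions $u$ and $v$,
\begin{equation*}
u^T H D_2^{(b)} v = -u^T M^{(b)} v + u^T B^{(b)} S v .
\end{equation*}
Substituting the hypothesized form of $M^{(b)}$ and using $u^T D_1^T = (D_1 u)^T$ gives
\begin{equation*}
u^T H D_2^{(b)} v = -(D_1 u)^T B^{(b)} H (D_1 v) + u^T B^{(b)} S v - u^T R^{(b)} v ,
\end{equation*}
which exhibits the action of $D_2^{(b)}$ entirely through $D_1$ and the boundary operators $B^{(b)}, S$, with $R^{(b)}$ as the only remaining term. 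This is precisely the form required of an operator compatible with $D_1$.

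The key step, which carries the whole argument, is to invoke the assumed positive semi--definiteness of $R^{(b)}$. Setting $v = u$, the remainder enters as $-u^T R^{(b)} u$, and since $R^{(b)} = (R^{(b)})^T$ with $u^T R^{(b)} u \ge 0$ for every $u$, this contribution has a fixed sign. It is exactly this sign property that the compatibility requirement demands in order for the discrete energy estimate to close, and it is guaranteed here directly by the hypothesis rather than by any further computation. Hence compatibility follows.

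I do not expect a genuine analytic obstacle: once the decomposition with a positive semi--definite remainder is granted, compatibility reduces to the one--line congruence identity above together with the sign of a single quadratic form. The only point needing care is bookkeeping — carrying the boundary terms $(D_1 u)^T B^{(b)} H (D_1 v)$ and $u^T B^{(b)} S v$ correctly through the substitution, and observing that because $B^{(b)}$ and $H$ are diagonal the matrix $D_1^T B^{(b)} H D_1$ is symmetric, so that the isolated remainder $R^{(b)} = M^{(b)} - D_1^T B^{(b)} H D_1$ is indeed the symmetric object to which the assumed semi--definiteness applies.
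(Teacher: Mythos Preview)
The paper does not prove this lemma. It is stated, together with Lemma~\ref{lemmaMb}, as a property of the variable--coefficient second--derivative operators, attributed to the cited references, and no argument is given. More to the point, in this paper the decomposition \emph{is} the definition of compatibility: the sentence ``$D_2^{(b)}$ is compatible with $D_1$ if $M^{(b)}$ can be written as \ldots'' introduces the term ``compatible'' and characterises it precisely by the existence of the splitting with a symmetric positive semi--definite remainder. There is no prior, independent notion of compatibility in the paper against which the decomposition could be checked.

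Your proposal therefore misreads the logical structure of the statement. You take the decomposition as hypothesis and attempt to deduce ``compatibility'', but you never supply a separate definition of compatibility to deduce; you only gesture at ``what compatibility asks'' in terms of a discrete energy balance. What you have actually written is a correct and useful computation showing \emph{why} such a decomposition deserves the name: substituting it into $u^T H D_2^{(b)} v$ yields a $D_1$--based quadratic form plus boundary terms plus a sign--definite remainder, which is exactly the mechanism the paper exploits later in the Appendix (the passage leading to \eqref{nct}). That is motivation, not proof. If the goal is to reproduce the paper's own argument, there is nothing to reproduce; if the goal is to establish that the concrete published operators actually admit this splitting, that requires the explicit constructions in the cited literature, which you explicitly set aside.
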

Lemma \ref{lemmaMc} is essential for energy stability when mixed derivatives are present in the equation, for example the wave equation on curvilinear grids and the elastic wave equation.

The definitions and precise forms of the above operators can be found in \cite{Kreiss1974,Mattsson2012,Mattsson2010,Mattsson2004,Strand1994}. These operators have the minimal interior stencil width. In addition, they have the same associated norm $H$ for a given accuracy order. In the stability analysis, we only consider numerical treatment of interface conditions. As a consequence, the operator $B$  in the preceding lemmas only has one nonzero element, corresponding to the terms on the interface. 

The interior stencil of an SBP operator is the standard central finite difference stencil with truncation error $\mathcal{O}(h^{2p})$. On a few grid points near boundaries, special one-sided stencils are used to fulfill the SBP requirement with a larger truncation error $\mathcal{O}(h^p)$. Operators $D_1$ and $D_2$ with $p=1,2,3,4$ are constructed in \cite{Kreiss1974,Strand1994} and \cite{Mattsson2004}, respectively. The variable coefficient operators $D_2^{(b)}$ with $p=1,2,3$ are constructed in \cite{Mattsson2012}.

In this paper, we call the above SBP operators {\em $2p^{th}$ order accurate}. When using in a numerical scheme, we also call the scheme {\em $2p^{th}$ order accurate}, even though the truncation error of the numerical scheme may not be $\mathcal{O}(h^{2p})$ or $\mathcal{O}(h^p)$. In the discussion of accuracy, we make the truncation error of the scheme precise. 

\subsection{The SAT method}
An SBP operator only approximates a certain derivative, but does not impose any boundary condition. The boundary conditions must be imposed carefully so that an energy estimate can be obtained to ensure stability. This can be done by for example the SAT method \cite{Carpenter1994}, the projection method \cite{Mattsson2006,Olsson1995a,Olsson1995b} and the ghost points method \cite{Petersson2010}. In this paper, we choose the SAT method to impose both boundary and interface conditions, since in many cases it is easy to derive an energy estimate. The key ingredient of the SAT method is to add penalty terms to the semi--discretized equation and choose penalty parameters so that an energy estimate is obtained. This technique bears a similarity to the Nitsche's finite element method \cite{Sticko2016}, and the discontinuous Galerkin method \cite{Appelo2015,Grote2006}. Detailed discussions of the SBP--SAT finite difference methods can be found in \cite{Fernandez2014,Svard2014}. 

\section{The wave equation on a Cartesian grid}
We start by considering the wave equation in two space dimensions in a composite domain $\Omega = [0,1]^2$ with an interface $\Gamma$ at $x=0.5$. The left and right domain are denoted by $\Omega_u$ and $\Omega_v$, respectively, and the equations  are
\begin{align}\label{eqn_Cartesian}
&U_{tt}=U_{xx}+U_{yy},\ (x,y)\in\Omega_u, \\
&V_{tt}=V_{xx}+V_{yy},\ (x,y)\in\Omega_v.
\end{align}
At the interface the physical conditions are
\begin{equation}\label{physical_conditions_Cartesian}
U(0.5,y,t)=V(0.5,y,t),\ U_x(0.5,y,t)=V_x(0.5,y,t).
\end{equation}
For a wellposed problem, suitable boundary conditions must be imposed at the boundaries. As the focus in this paper is the numerical treatment of interface coupling, we exclude discussions on boundary conditions and the corresponding numerical techniques. We refer to \cite{Kreiss2012} for physical boundary conditions, and \cite{Mattsson2009} for the numerical techniques. 

To solve  \eqref{eqn_Cartesian}--\eqref{physical_conditions_Cartesian}, we start by generating a Cartesian grid in each domain independently with $n_{ux}\times n_{uy}$ grid points in $\Omega_u$ and $n_{vx}\times n_{vy}$ grid points in $\Omega_v$. We are particularly interested in a non--conforming interface when $n_{uy}\neq n_{vy}$. In this case, the solutions on the interface must be interpolated. We denote $I_{u2v}$ and $I_{v2u}$ interpolation operators that interpolate the solution from $\Omega_u$ to $\Omega_v$, and from $\Omega_v$ to $\Omega_u$, respectively. In the SBP--SAT finite difference framework, these operators must satisfy certain conditions so that the scheme could be energy stable. 

\begin{definition}
Let $H_{u}$ and $H_{v}$ denote the SBP norms on the interface for the grid in $\Omega_u$ and $\Omega_v$, respectively. The interpolation operators $I_{u2v}$ and $I_{v2u}$ are \emph{norm--compatible} if 
\begin{equation}\label{nc}
H_{u}I_{v2u}=(H_{v}I_{u2v})^T.
\end{equation}
\end{definition}
In \cite{Wang2016}, it is also defined that the interpolation operators are \emph{norm--contracting} if the two operators
\begin{equation*}
H_{u}(I_{u}-I_{v2u}I_{u2v})\text{ and } H_{v}(I_{v}-I_{u2v}I_{v2u})
\end{equation*}
are symmetric positive semi--definite, where $I_u$ and $I_v$ are identity operators.

Norm--compatible interpolation operators are first constructed in \cite{Mattsson2010} for the case of a 1:2 mesh refinement ratio, and are extended to an arbitrary ratio in \cite{Kozdon2016}. The accuracy property of these interpolation operators has a similar fashion as the corresponding SBP operators. More precisely, the interpolation error is $\mathcal{O}(h^{2p})$ in the interior of the interface, and $\mathcal{O}(h^{p})$ on a few grid points near the edge of the interface. Therefore, the interpolation is exact only for polynomials of order up to $p-1$. In \cite{Lundquist}, it is proved that it is not possible to construct norm--compatible interpolation operators $I_{u2v}$ and $I_{v2u}$ such that both  interpolate polynomials of order $p$ or higher. 

A stable SBP--SAT finite difference method for solving \eqref{eqn_Cartesian}--\eqref{physical_conditions_Cartesian} is presented in \cite{Wang2016}. Energy stability is proved by assuming the interpolation operators are \emph{norm--compatible} and \emph{norm--contracting}. While the  \emph{norm--compatible} condition can be constrained when constructing the operators, it is not easy to take into account the \emph{norm--contracting} condition.  In fact, the interpolation operators with higher than fourth order accuracy in \cite{Kozdon2016,Mattsson2010} are not \emph{norm--contracting}. 

Below we present a new way of imposing the interface conditions \eqref{physical_conditions_Cartesian} with the advantage that an energy estimate is obtained without requiring the interpolation operators to be \emph{norm--contracting}. For cleaner notations, terms imposing boundary conditions are omitted.

Equation \eqref{eqn_Cartesian}--\eqref{physical_conditions_Cartesian} are discretized in space as
\begin{align}
&u_{tt}=\bs{D_u}u+SAT_{u1}+SAT_{u2}+SAT_{u3}+SAT_{\partial u},\label{semi1_Cartesian} \\
&v_{tt}=\bs{D_v}v+SAT_{v1}+SAT_{v2}+SAT_{v3}+SAT_{\partial v},\label{semi2_Cartesian}
\end{align}
where
\begin{subequations}
\begin{align}
&SAT_{u1} = \frac{1}{2} \bs{H_{ux}^{-1}} \bs{S_{ux}^T}(\bs{E_{ux}}u - (E_{uv}\otimes I_{v2u})v),  \label{SATu1} \\
&SAT_{u2} = -\frac{\tau}{2} \bs{H_{ux}^{-1}} (\bs{E_{ux}}u - (E_{uv}\otimes I_{v2u})v) \label{SATu2}, \\
&SAT_{u3} = -\frac{\tau}{2} \bs{H_{ux}^{-1}} ( (E_{ux}\otimes(I_{v2u}I_{u2v}) )u - (E_{uv}\otimes I_{v2u})v), \label{SATu3}\\
&SAT_{\partial u} = -\frac{1}{2} \bs{H_{ux}^{-1}} (\bs{E_{ux}}\bs{S_{ux}}u-(E_{uv}\otimes I_{v2u})\bs{S_{vx}}v), \label{SATu4} 
\end{align}
\end{subequations}
and
\begin{subequations}
\begin{align}
&SAT_{v1} = -\frac{1}{2} \bs{H_{vx}^{-1}} \bs{S_{vx}^T}(\bs{E_{vx}}v - (E_{vu}\otimes I_{u2v})u ),\\
&SAT_{v2} = -\frac{\tau}{2} \bs{H_{vx}^{-1}} (\bs{E_{vx}}v - (E_{vu}\otimes I_{u2v})u), \\
&SAT_{v3} = -\frac{\tau}{2} \bs{H_{vx}^{-1}} ( (E_{vx}\otimes(I_{u2v}I_{v2u}) )v - (E_{vu}\otimes I_{u2v})u),  \label{SATv3} \\
&SAT_{\partial v} = \frac{1}{2} \bs{H_{vx}^{-1}} (\bs{E_{vx}}\bs{S_{vx}}v-(E_{vu}\otimes I_{u2v})\bs{S_{ux}}u). 
\end{align}
\end{subequations}
The numerical solution vectors $u$ and $v$ approximate the true solution $U$ and $V$, respectively. The solution vectors are arranged column--wise, i.e. the first few elements of $u$ and $v$ correspond to the solutions on the left boundary of $\Omega_u$ and $\Omega_v$, respectively. Most operators in two space dimensions can be extended from the corresponding one dimensional operators by using a Kronecker product $\otimes$. Such two dimensional operators are denoted by bold letters, with the subscript indicating the spatial direction and the grid function that the operator is associated to. For example, the operator $\bs{H_{ux}^{-1}}$ equals to $H_{ux}^{-1}\otimes I_{uy}$, where $H_{ux}^{-1}$ is the inverse of the SBP norm in the $x$--direction acting on $u$, and $I_{uy}$ is an identity operator. The operator $\bs{E}$ extracts the numerical solution at the interface. $\bs{D_u}u$ and $\bs{D_v}v$ are SBP approximations of $U_{xx} + U_{yy}$ and $V_{xx} + V_{yy}$, respectively.

We compare the above scheme with the ones described in \cite{Virta2014,Wang2016} by discussing the penalty terms \eqref{SATu1}--\eqref{SATu4}. A term like $\bs{E_{ux}}u$ used in \cite{Virta2014,Wang2016} is broken into two parts: $1/2\boldsymbol{E_{ux}}u$ in \eqref{SATu2} and $1/2E_{ux}\otimes I_{v2u}I_{u2v}u$ in \eqref{SATu3}.  Note the relation between them:  $E_{ux}\otimes I_{v2u}I_{u2v}u$ is just $\boldsymbol{E_{ux}}u$ interpolated to the grid on the interface of $\Omega_v$, then interpolated back to the grid of $\Omega_u$. Since the interpolation is not exact, $E_{ux}\otimes I_{v2u}I_{u2v}u$ differs from $\boldsymbol{E_{ux}}u$ by the truncation error of the interpolation operators. It is this change in penalty terms that makes the scheme stable without requiring the interpolation operators to be \emph{norm--contracting}. We summarize the stability result in the following theorem.

\begin{theorem}\label{thm1}
With norm--compatible interpolation operators, the semi--discretization \eqref{semi1_Cartesian}--\eqref{semi2_Cartesian} is stable for any $\tau$ such that 
\begin{equation}\label{tau_limit}
\tau\geq\max\left( \frac{1}{2\theta h_{ux}}, \frac{1}{2\theta h_{vx}} \right),
\end{equation}
where $\theta$ is the constant in Lemma \ref{lemmaM}, and $h_{ux}$ and $h_{vx}$ are the mesh size in the $x$--direction in $\Omega_u$ and $\Omega_v$, respectively. 
\end{theorem}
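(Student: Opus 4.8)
The plan is to establish the energy estimate by the standard energy method. Multiply \eqref{semi1_Cartesian} by $u_t^{T}\bs{H_u}$ and \eqref{semi2_Cartesian} by $v_t^{T}\bs{H_v}$, with $\bs{H_u}=H_{ux}\otimes H_{uy}$, $\bs{H_v}=H_{vx}\otimes H_{vy}$, and add. Here $u_t^{T}\bs{H_u}u_{tt}=\tfrac12\tfrac{d}{dt}\|u_t\|_{\bs{H_u}}^{2}$, and, applying Definition~\ref{D2} in tensor form (dropping the physical $y$-boundary closure terms, as the paper does), $u_t^{T}\bs{H_u}\bs{D_u}u=-\tfrac12\tfrac{d}{dt}\|u\|_{\bs{M_u}}^{2}+\dot a^{T}H_{uy}a'$, where $\|u\|_{\bs{M_u}}^{2}=u^{T}(M_{ux}\otimes H_{uy})u+u^{T}(H_{ux}\otimes M_{uy})u\ge0$ and $a,a',\dot a$ denote the traces on $\Gamma$ of $\bs{E_{ux}}u$, $\bs{S_{ux}}u$, $u_t$ (and analogously $b,b',\dot b$ on the $\Omega_v$ side). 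The only interface term produced by the bulk operators is $\dot a^{T}H_{uy}a'-\dot b^{T}H_{vy}b'$, the sign change reflecting the opposite orientation of $B$ at $\Gamma$; thus after this step the whole estimate has the bulk part in $\tfrac{d}{dt}$ form and everything else localized on $\Gamma$.

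Next I would evaluate $u_t^{T}\bs{H_u}(SAT_{u1}+SAT_{u2}+SAT_{u3}+SAT_{\partial u})$ and its $\Omega_v$ counterpart. Using $\bs{H_u}\bs{H_{ux}^{-1}}=I_{ux}\otimes H_{uy}$, $\bs{H_u}\bs{H_{ux}^{-1}}\bs{S_{ux}^{T}}=S_{ux}^{T}\otimes H_{uy}$, and the fact that $\bs{E_{ux}}u$, $(E_{uv}\otimes I_{v2u})v$, $(E_{ux}\otimes I_{v2u}I_{u2v})u$, etc.\ are supported on $\Gamma$, every SAT contribution collapses to a bilinear form in $a,a',b,b'$, their time derivatives, and the interpolants, weighted by $H_{uy}$ or $H_{vy}$. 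The decisive algebraic input is norm--compatibility \eqref{nc}: it lets me turn every cross term into a symmetric shape, e.g.\ $a^{T}H_{uy}I_{v2u}b'=(I_{u2v}a)^{T}H_{vy}b'$, and it shows $H_{uy}I_{v2u}I_{u2v}=I_{u2v}^{T}H_{vy}I_{u2v}$ is symmetric positive semidefinite. With these one verifies that the sum of $\dot a^{T}H_{uy}a'-\dot b^{T}H_{vy}b'$ and all eight SAT contributions is an exact time derivative, namely $\tfrac{d}{dt}$ of
\[
\tfrac12\big(a^{T}H_{uy}a'-b^{T}H_{vy}b'\big)+\tfrac12\big(a^{T}H_{uy}I_{v2u}b'-a'^{T}H_{uy}I_{v2u}b\big)-\tfrac{\tau}{4}\big(\|a-I_{v2u}b\|_{H_{uy}}^{2}+\|I_{u2v}a-b\|_{H_{vy}}^{2}\big).
\]
The point to stress is that the penalty piece is a \emph{clean} sum of squared interface mismatches, with \emph{no} residual term $a^{T}H_{uy}(I_u-I_{v2u}I_{u2v})a$; that residual, whose positivity is precisely the \emph{norm--contracting} hypothesis used in \cite{Wang2016}, is removed by splitting $\bs{E_{ux}}u$ between \eqref{SATu2} and \eqref{SATu3}. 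This is the conceptual heart of the improvement.

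Moving this exact derivative to the left-hand side gives $\tfrac{d}{dt}E=0$ (and $\le0$ once the omitted dissipative boundary terms are restored), with
\[
E=\tfrac12\|u_t\|_{\bs{H_u}}^{2}+\tfrac12\|v_t\|_{\bs{H_v}}^{2}+\tfrac12\|u\|_{\bs{M_u}}^{2}+\tfrac12\|v\|_{\bs{M_v}}^{2}-\tfrac12\big(a^{T}H_{uy}a'-b^{T}H_{vy}b'\big)-\tfrac12\big(a^{T}H_{uy}I_{v2u}b'-a'^{T}H_{uy}I_{v2u}b\big)+\tfrac{\tau}{4}\big(\|a-I_{v2u}b\|_{H_{uy}}^{2}+\|I_{u2v}a-b\|_{H_{vy}}^{2}\big).
\]
It remains to show $E\ge0$. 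The kinetic terms and the $y$-contributions $u^{T}(H_{ux}\otimes M_{uy})u$, $v^{T}(H_{vx}\otimes M_{vy})v$ are non-negative and discarded. For the rest I would invoke the borrowing trick, Lemma~\ref{lemmaM}: $u^{T}(M_{ux}\otimes H_{uy})u=u^{T}(\widetilde M_{ux}\otimes H_{uy})u+\theta h_{ux}\|a'\|_{H_{uy}}^{2}$ and likewise $v^{T}(M_{vx}\otimes H_{vy})v=v^{T}(\widetilde M_{vx}\otimes H_{vy})v+\theta h_{vx}\|b'\|_{H_{vy}}^{2}$ with $\widetilde M_{ux},\widetilde M_{vx}\succeq0$. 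Dropping the $\widetilde M$ terms and regrouping the terms linear in $a',b'$ (again via \eqref{nc}) into $-\tfrac12(a')^{T}H_{uy}(a-I_{v2u}b)$ and $-\tfrac12(b')^{T}H_{vy}(I_{u2v}a-b)$, the leftover decouples into two quadratic forms of the same shape, the first being
\[
\tfrac{\theta h_{ux}}{2}\|a'\|_{H_{uy}}^{2}-\tfrac12(a')^{T}H_{uy}(a-I_{v2u}b)+\tfrac{\tau}{4}\|a-I_{v2u}b\|_{H_{uy}}^{2},
\]
and the second the analogue with $(b',\,I_{u2v}a-b,\,h_{vx})$ and norm $H_{vy}$. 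Completing the square, each is non-negative for all arguments iff $\tfrac{\tau}{4}\ge\tfrac{(1/2)^{2}}{4\,(\theta h_{ux}/2)}=\tfrac{1}{8\theta h_{ux}}$, i.e.\ $\tau\ge\tfrac{1}{2\theta h_{ux}}$, respectively $\tau\ge\tfrac{1}{2\theta h_{vx}}$. Under \eqref{tau_limit} both hold, so $E\ge0$; since $E$ is conserved, $E(t)=E(0)$, which bounds $\|u_t\|_{\bs{H_u}}$, $\|v_t\|_{\bs{H_v}}$ and the seminorms $\|u\|_{\bs{M_u}}$, $\|v\|_{\bs{M_v}}$ by the data, i.e.\ the scheme is stable.

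The step I expect to be the main obstacle is the bookkeeping in the second paragraph: accounting for all nine interface contributions per side (one from the bulk operator and four from the SATs, several of them split further by the Kronecker--interpolation structure) and checking, through the \eqref{nc} identities, that they telescope into the single exact time derivative above with the penalty part appearing as a bare mismatch square. Once that is done, the borrowing trick and the elementary $2\times2$ positivity condition yielding \eqref{tau_limit} are routine; a minor point still needing care is that $a'$ and $a-I_{v2u}b$ (respectively $b'$ and $I_{u2v}a-b$) may be treated as independent arguments in the positivity check, which is justified because they involve disjoint degrees of freedom.
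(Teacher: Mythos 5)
Your proposal is correct and follows essentially the same route as the paper: the energy method with $u_t^T(H_{ux}\otimes H_{uy})$ and $v_t^T(H_{vx}\otimes H_{vy})$, the norm--compatibility identity \eqref{nc} to symmetrize the cross terms (including the key observation that $H_{uy}I_{v2u}I_{u2v}=I_{u2v}^TH_{vy}I_{u2v}$ is symmetric, which is what lets the split penalty become an exact time derivative of a mismatch square), the borrowing trick of Lemma~\ref{lemmaM}, and a completion of the square yielding exactly \eqref{tau_limit}; your interface quadratic forms are, up to an overall factor of $1/2$, the paper's $\mathbf{G_2}$ and $\mathbf{G_3}$. The only cosmetic blemish is the closing justification that $a'$ and $a-I_{v2u}b$ "involve disjoint degrees of freedom" (they do not, since $S_{ux}$ and $E_{ux}$ both act on grid points at the interface), but this is immaterial because non--negativity of the quadratic form over all independent arguments is a sufficient condition, which is all the theorem requires.
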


\begin{proof}
We prove stability by the energy method. Multiplying from the left of \eqref{semi1_Cartesian} by $u_t^T (H_{ux}\otimes H_{uy})$ and  \eqref{semi2_Cartesian} by $v_t^T (H_{vx}\otimes H_{vy})$, we obtain
\begin{align*}
&u_t^T (H_{ux}\otimes H_{uy}) u_{tt} + v_t^T (H_{vx}\otimes H_{vy}) v_{tt}\\
=\ & u_t^T (H_{ux}\otimes H_{uy}) \bs{D_u}u + v_t^T (H_{vx}\otimes H_{vy}) \bs{D_v}v\\
&+ \frac{1}{2}  u_t^T \bs{H_{uy}} \bs{S_{ux}^T}(\bs{E_{ux}}u - (E_{uv}\otimes I_{v2u})v) \\
&-\frac{\tau}{2} u_t^T \bs{H_{uy}} (\bs{E_{ux}}u - (E_{uv}\otimes I_{v2u})v) \\
&-\frac{\tau}{2} u_t^T\bs{H_{uy}} ( (E_{ux}\otimes(I_{v2u}I_{u2v}) )u - (E_{uv}\otimes I_{v2u})v) \\
&-\frac{1}{2}  u_t^T \bs{H_{uy}} (\bs{E_{ux}}\bs{S_{ux}}u-(E_{uv}\otimes I_{v2u})\bs{S_{vx}}v)\\
&-\frac{1}{2} v_t^T \bs{H_{vy}} \bs{S_{vx}^T}(\bs{E_{vx}}v - (E_{vu}\otimes I_{u2v})u) \\
&-\frac{\tau}{2}  v_t^T\bs{H_{vy}} (\bs{E_{vx}}v - (E_{vu}\otimes I_{u2v})u) \\
&-\frac{\tau}{2}  v_t^T\bs{H_{vy}} ( (E_{vx}\otimes(I_{u2v}I_{v2u}) )v - (E_{vu}\otimes I_{u2v})u) \\
& +\frac{1}{2} v_t^T \bs{H_{vy}} (\bs{E_{vx}}\bs{S_{vx}}v-(E_{vu}\otimes I_{u2v})\bs{S_{ux}}u).
\end{align*}

We note that the left--hand side of the above equation can be written as the time derivative of a quadratic term. The main idea of deriving an energy estimate is to move all terms on the right--hand side to the left, and determine the penalty parameter $\tau$ so that all terms on the left--hand side  can be written as the time derivative of a non--negative quantity, i.e. the discrete energy. 

To do so, we use the \emph{borrowing trick} in Lemma \ref{lemmaM} for the SBP operators in the $x$--direction, and the \emph{norm--compatible} property \eqref{nc} of the interpolation operators, to obtain the change of energy $\mathbf{G}$ as
 \begin{align}\label{ddtG}
\frac{d}{dt} \mathbf{G}=\frac{d}{dt} (\mathbf{G_1}+\mathbf{G_2}+\mathbf{G_3})=0,
\end{align}
where 
\begin{align*}
 \mathbf{G_1} =& u_t^T (H_{ux}\otimes H_{uy}) u_t + v_t^T (H_{vx}\otimes H_{vy}) v_t  \\
 &+ u^T (H_{ux}\otimes M_{uy}) u + v^T (H_{vx}\otimes M_{vy})v \\
 &+ u^T (\widetilde M_{ux}\otimes H_{uy}) u + v^T (\widetilde M_{vx}\otimes H_{vy}) v,\\
 \mathbf{G_2} =& h_{ux}\theta (\bs{E_{ux}S_{ux}}u) \bs{H_{uy}}(\bs{E_{ux}S_{ux}}u) \\
 &- (\bs{E_{ux}S_{ux}}u)^T \bs{H_{uy}} (\bs{E_{ux}}u - (E_{uv}\otimes I_{v2u})v) \\
 &+\frac{\tau}{2}(\bs{E_{ux}}u - (E_{uv}\otimes I_{v2u})v)^T \bs{H_{uy}} (\bs{E_{ux}}u - (E_{uv}\otimes I_{v2u})v), \\
  \mathbf{G_3} =& h_{vx}\theta (\bs{E_{vx}S_{vx}}v) \bs{H_{vy}} (\bs{E_{vx}S_{vx}}v) \\
 &- (\bs{E_{vx}S_{vx}}v)^T \bs{H_{vy}} ((E_{vu}\otimes I_{u2v})u - \bs{E_{vx}}v) \\
 &+\frac{\tau}{2} ((E_{vu}\otimes I_{u2v})u - \bs{E_{vx}}v)^T \bs{H_{vy}}  ((E_{vu}\otimes I_{u2v})u - \bs{E_{vx}}v).
\end{align*}
Clearly, $\mathbf{G_1}\geq 0$. By Young's inequality, we have $\mathbf{G_2}\geq 0$ and $\mathbf{G_3}\geq 0$ if $\tau\geq 1/(2\theta h_{ux})$ and $\tau\geq 1/(2\theta h_{vx})$, respectively. Therefore, the energy is conserved and the scheme is stable when \eqref{tau_limit} is satisfied.
\end{proof}
We note that in the scheme developed in \cite{Wang2016} the energy is greater or equal to $\mathbf{G}$ in \eqref{ddtG}, with the inequality resulted from the \emph{norm--contracting} condition.

\section{The wave equation on curvilinear grids}
In this section, we generalize the scheme to problems on curvilinear grids. We consider two cases: conforming blocks and non--conforming blocks, which are illustrated in Figure \ref{ConformingBlockCur} and \ref{NonconformingBlockCur}, respectively. 

\subsection{Numerical interface treatment of conforming blocks}\label{sec-conforming}
With only conforming blocks in the domain, the corners of adjacent blocks meet. We consider again the domain $\Omega=[0,1]^2$ but partitioned into two blocks $\Omega_u$ and $\Omega_v$ by a curved interface. The grids are then constructed independently in each block, see an illustration in Figure \ref{ConformingBlockCur}. The grids in each block are mapped to a Cartesian grid in a reference domain. The governing equations are also transformed from the physical domain to the reference domain, and the computation is performed in the reference domain. We refer to the textbook \cite{Knupp1993} for a detailed discussion on grid generation.   

\begin{figure}
     \centering
     \subfloat[]{\includegraphics[width=0.45\textwidth]{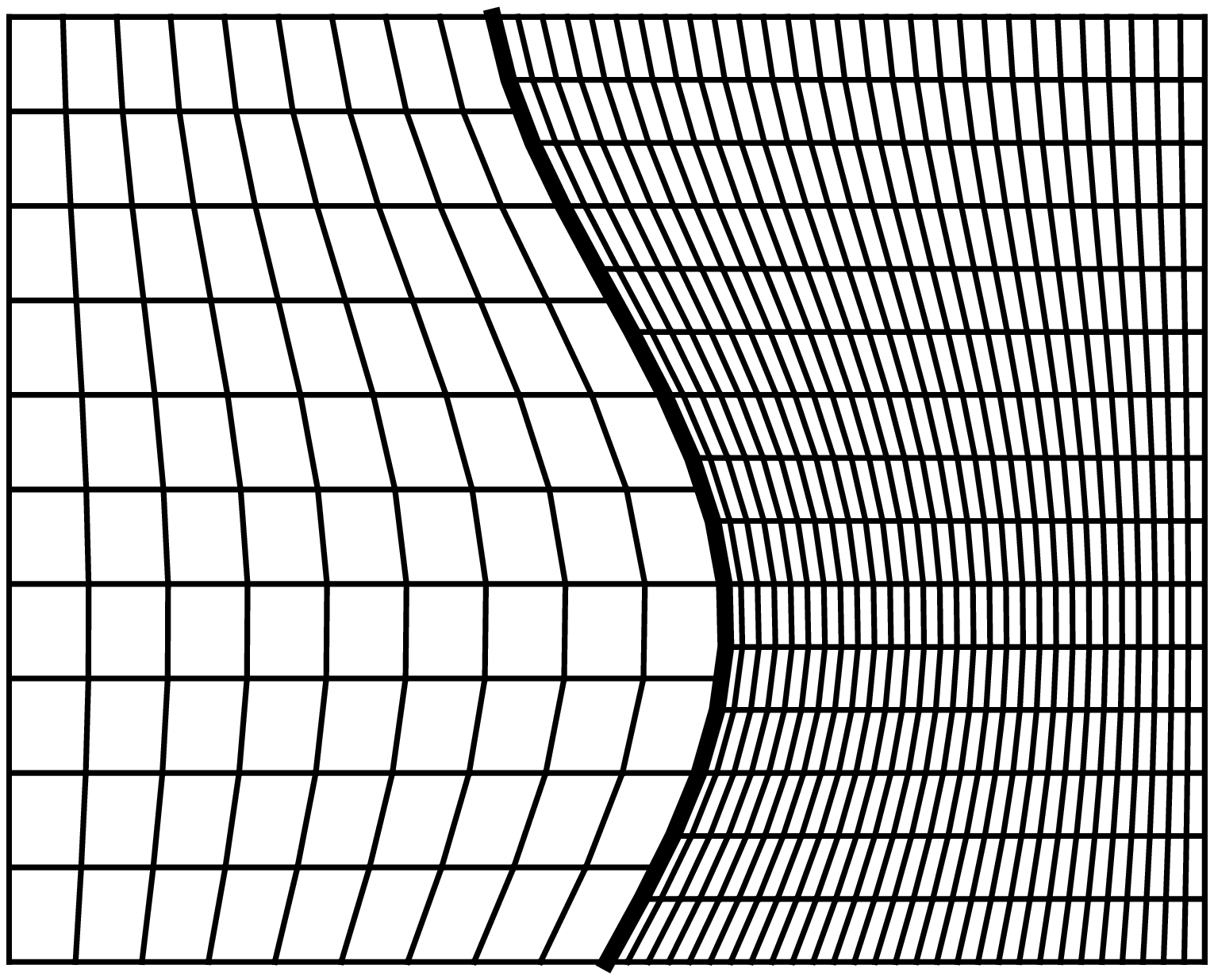}\label{ConformingBlockCur}}
     \subfloat[]{\includegraphics[width=0.45\textwidth]{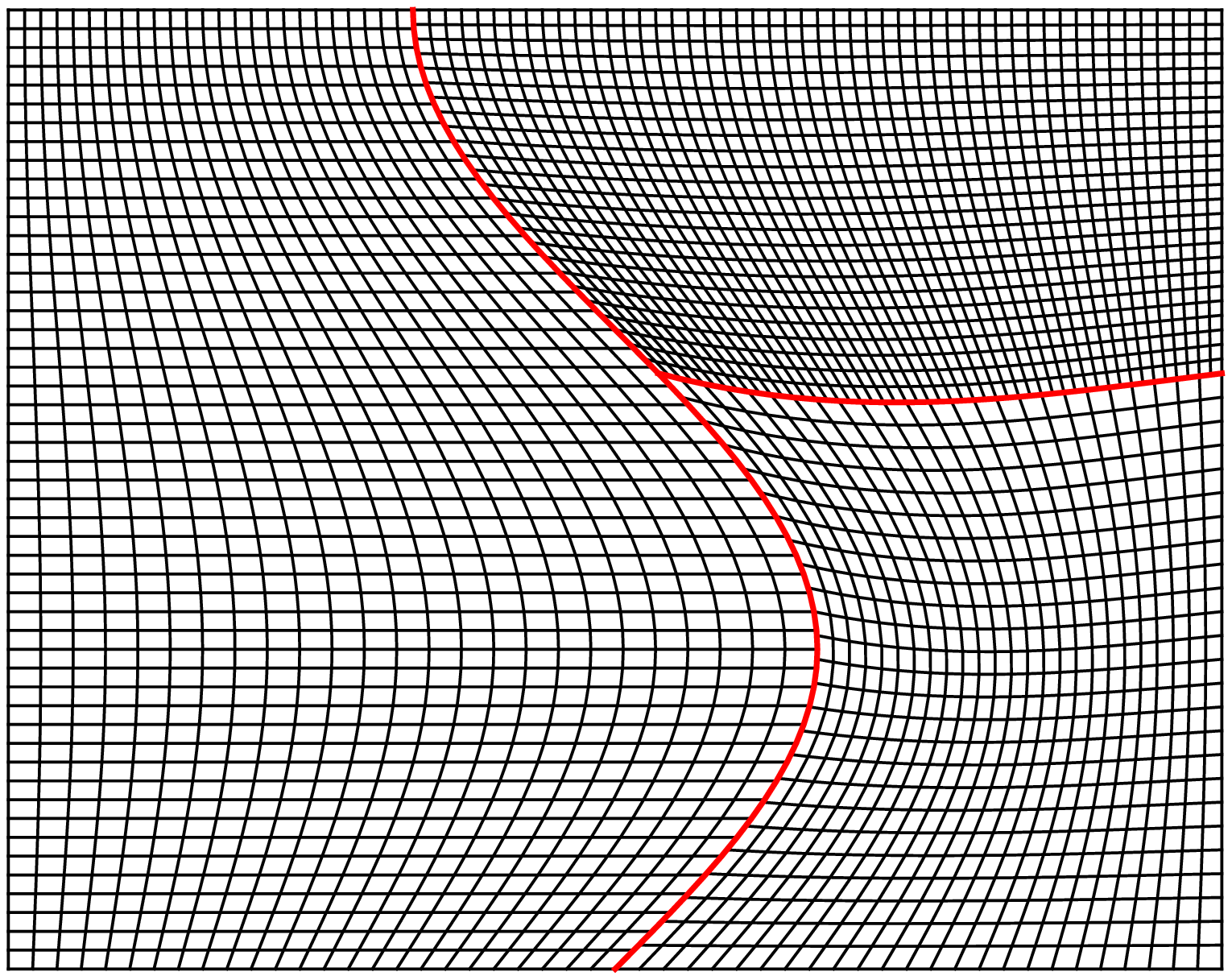}\label{NonconformingBlockCur}}
     \caption{Non--conforming interfaces with (a) conforming blocks (b) non--conforming blocks}
     \label{ConformingBlocks}
\end{figure}

The transformed equation in the reference domain can be derived by using the chain rule in calculus. We omit its derivation, and refer to  \cite{Almquist2014}. With $(x,y)$ denoting the coordinate in the reference domain, the unknown variables $U$ and $V$ are governed by the equation
\begin{equation}\label{UV}
\begin{split}
J_U U_{tt}&=(aU_x)_x+(cU_y)_y+(bU_y)_x+(bU_x)_y,\\
J_V V_{tt}&=(\alpha V_x)_x+(\gamma V_y)_y+(\beta V_y)_x+(\beta V_x)_y,
\end{split}
\end{equation} 
where the Jacobians $J_U,J_V>0$ and the variable coefficients satisfy $ac-b^2>0$ and $\alpha\gamma-\beta^2>0$. The interface conditions become 
\begin{equation}\label{UV_interface}
\begin{split}
&U=V,\\
&aU_x+bU_y=\alpha V_x+\beta V_y,
\end{split}
\end{equation} 
on $(x,y)\in\Gamma$. The coefficients in \eqref{UV} consist of metric derivatives that depend on the geometry of the physical domain and the transformation. The metric derivatives can either be computed analytically, or approximated to sufficient high accuracy. In the experiments in this paper, we choose the latter approach by using a tenth order finite difference stencil, making sure that the approximation of metric terms does not affect the overall accuracy of the numerical scheme.

As shown in \cite{Virta2014}, the equation \eqref{UV} together with the interface condition \eqref{UV_interface} admit a continuous energy estimate, thanks to the positive definiteness of the matrices $\begin{bmatrix} a & b \\ 
b & c
\end{bmatrix}$ and $\begin{bmatrix} \alpha & \beta \\ 
\beta & \gamma
\end{bmatrix}$ because of $ac-b^2>0$ and $\alpha\gamma-\beta^2>0$. 

The equations in \eqref{UV} are discretized by using the SBP operators, and the two blocks are patched together by the SAT method. The semi--discretized equations are
\begin{equation}\label{semi}
\begin{split}
&J_u u_{tt}=(\overline{D_{2ux}^{(a)}})u+(\overline{D_{2uy}^{(c)}})u+\boldsymbol{D_{1ux}}{\Lambda_b}\boldsymbol{D_{1uy}}u+\boldsymbol{D_{1uy}}{\Lambda_b}\boldsymbol{D_{1ux}}u+SAT_u, \\
&J_v v_{tt}=(\overline{D_{2vx}^{(\alpha)}})v+(\overline{D_{2vy}^{(\gamma)}})v+\boldsymbol{D_{1vx}}{\Lambda_\beta}\boldsymbol{D_{1vy}}v+\boldsymbol{D_{1vy}}{\Lambda_\beta}\boldsymbol{D_{1vx}}v+SAT_v,
\end{split}
\end{equation} 
where
\small
\begin{equation}\label{SATu}
\begin{split}
&SAT_u=\frac{1}{2}\boldsymbol{H_{ux}^{-1}}\boldsymbol{H_{uy}^{-1}}({\Lambda_a}\boldsymbol{E_{ux}S_{ux}}+{\Lambda_b}\boldsymbol{E_{ux}D_{1uy}})^T\boldsymbol{H_{uy}}(\boldsymbol{E_{ux}}u-(E_{uv}\otimes I_{v2u})v)\\
&-\frac{\tau}{2}\boldsymbol{H_{ux}^{-1}}((E_{ux}\otimes (I_{v2u}I_{u2v}))u-(E_{uv}\otimes I_{v2u})v)\\
&-\frac{\tau}{2}\boldsymbol{H_{ux}^{-1}}(\boldsymbol{E_{ux}}u-(E_{uv}\otimes I_{v2u})v)\\
&-\frac{1}{2}\boldsymbol{H_{ux}^{-1}}(({\Lambda_a}\boldsymbol{E_{ux}S_{ux}}+{\Lambda_b}\boldsymbol{E_{ux}D_{1uy}})u-(E_{uv}\otimes I_{v2u})({\Lambda_\alpha}\boldsymbol{E_{vx}S_{vx}}+{\Lambda_\beta}\boldsymbol{E_{vx}D_{1vy}})v),
\end{split}
\end{equation} 
\normalsize
and
\small
\begin{equation}\label{SATv}
\begin{split}
&SAT_v=-\frac{1}{2}\boldsymbol{H_{vx}^{-1}}\boldsymbol{H_{vy}^{-1}}({\Lambda_\alpha}\boldsymbol{E_{vx}S_{vx}}+{\Lambda_\beta}\boldsymbol{E_{vx}D_{1vy}})^T\boldsymbol{H_{vy}}(\boldsymbol{E_{vx}}v-(E_{vu}\otimes I_{u2v})u)\\
&-\frac{\tau}{2}\boldsymbol{H_{vx}^{-1}}((E_{vx}\otimes (I_{u2v}I_{v2u}))v-(E_{vu}\otimes I_{u2v})u)\\
&-\frac{\tau}{2}\boldsymbol{H_{vx}^{-1}}(\boldsymbol{E_{vx}}v-(E_{vu}\otimes I_{u2v})u)\\
&+\frac{1}{2}\boldsymbol{H_{vx}^{-1}}(({\Lambda_\alpha}\boldsymbol{E_{vx}S_{vx}}+{\Lambda_\beta}\boldsymbol{E_{vx}D_{1vy}})v-(E_{vu}\otimes I_{u2v})({\Lambda_a}\boldsymbol{E_{ux}S_{ux}}+{\Lambda_b}\boldsymbol{E_{ux}D_{1uy}})u).
\end{split}
\end{equation} 
\normalsize

We now clarify the notations in the semi--discretization \eqref{semi}. 
\begin{enumerate}
\item The mixed--derivative terms: $(bU_y)_x$ is approximated by $\boldsymbol{D_{1ux}}{\Lambda_b}\boldsymbol{D_{1uy}}u$, where ${\Lambda_b}$ is a diagonal matrix with diagonal entries $b(x,y)$ evaluated on the grid. The operators approximating the other three mixed--derivative terms are constructed in a similar way.
\item The variable--coefficient terms: In general the variable coefficients are functions of both $x$ and $y$, therefore an operator approximating $(aU_x)_x$ cannot be constructed by a single Kronecker product, but as a sum
\begin{equation*}
\overline{D_{2ux}^{(a)}}=\sum_{i=1}^{n_{uy}} D_{2ux}^{(a_i)}\otimes E_{uy}^i,
\end{equation*}
where $a_i$ is $a(x_i,y)$ evaluated on the grid, and $E_{uy}^i$ has value one in entry $(i,i)$ and zeros elsewhere. The second derivative operator $D_{2ux}^{(a_i)}$ is defined in Definition \ref{D2b} with the operator $M^{(a_i)}$ satisfying Lemma \ref{lemmaMb} and  \ref{lemmaMc}. The operators approximating the other three variable coefficient terms are constructed in a similar way.
\item The penalty terms: The two interpolation operators $I_{u2v}$ and $I_{v2u}$ constructed in \cite{Kozdon2016,Mattsson2010} satisfy the norm--compatible condition
\begin{equation*}
H_{uy}I_{v2u}=(H_{vy}I_{u2v})^T.
\end{equation*}
\end{enumerate}

Similar to the continuous case, the matrices $\Lambda_u=\begin{bmatrix} \Lambda_a & \Lambda_b \\ 
\Lambda_b & \Lambda_c
\end{bmatrix}$ and $\Lambda_v=\begin{bmatrix} \Lambda_\alpha & \Lambda_\beta \\ 
\Lambda_\beta & \Lambda_\gamma
\end{bmatrix}$ 
are positive definite. In fact, the  eigenvalues of $\Lambda_u$ and $\Lambda_v$ play an important role in the stability analysis. In particular, we will use the smallest eigenvalue
\begin{equation}\label{def_delta}
\delta = \frac{1}{2}\min\left(a_{ij}+c_{ij}-\sqrt{(a_{ij}-c_{ij})^2+4b^2_{ij}}, \alpha_{kl}+\gamma_{kl}-\sqrt{(\alpha_{kl}-\gamma_{kl})^2+4\beta^2_{kl}}\right), 
\end{equation}
where $i=1,2,\cdots, n_{ux},\ j=1,2,\cdots, n_{uy},\ k=1,2,\cdots, n_{vx},\ l=1,2,\cdots, n_{vy}$. Note that $\delta>0$, $a_{ij}-\delta\geq 0$ and $\alpha_{ij}-\delta\geq 0$. 


Let $a_{\max}$, $b_{\max}$, $\alpha_{\max}$ and $\beta_{\max}$ denote the maximum values of the variable coefficients $a(x,y)$, $b(x,y)$, $\alpha(x,y)$ and $\beta(x,y)$ evaluated on the interface, respectively. We also denote $h_{ux}$ and $h_{vx}$ the mesh size in the $x$--direction in $\Omega_u$ and $\Omega_v$, respectively. Stability of the semi--discretization is given by the following theorem.

\begin{theorem}\label{thm2}
The semi--discretization \eqref{semi}--\eqref{SATv} is stable if the interpolation operators $I_{u2v}$ and $I_{v2u}$ are norm--compatible and the penalty parameter $\tau$ satisfies 
\begin{equation*}
\tau\geq \max \left(\frac{a_{\max}^2+b_{\max}^2h_{ux}\sigma}{2h_{ux}\sigma\delta},\frac{\alpha_{\max}^2+\beta_{\max}^2h_{vx}\sigma}{2h_{vx}\sigma\delta}   \right),
\end{equation*}
where $\sigma$ is defined in Lemma \ref{lemmaMb}, and $\delta$ is defined in \eqref{def_delta}.
\end{theorem}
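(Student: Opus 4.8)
The plan is to mimic the energy-method proof of Theorem~\ref{thm1}, but now carrying the variable coefficients and the mixed-derivative terms along. First I would multiply the first equation in \eqref{semi} from the left by $u_t^T(H_{ux}\otimes H_{uy})$ and the second by $v_t^T(H_{vx}\otimes H_{vy})$, and add. The left-hand side is immediately $\tfrac12\tfrac{d}{dt}\big(u_t^T J_u(H_{ux}\otimes H_{uy})u_t + v_t^T J_v(H_{vx}\otimes H_{vy})v_t\big)$. For the volume terms on the right, I would invoke the SBP property of $D_2^{(a)}$ and $D_2^{(\gamma)}$ etc. (Definition~\ref{D2b}) to integrate by parts, producing symmetric positive semi-definite ``stiffness'' contributions of the form $u^T(M^{(a)}\otimes H_{uy})u$ plus boundary/interface terms involving $\boldsymbol{E_{ux}S_{ux}}$, and similarly for the $D_1\Lambda D_1$ mixed terms one uses the identity \eqref{D1_eq} so that the four metric terms combine, after taking $\tfrac{d}{dt}$, into the quadratic form associated with the positive definite matrix $\Lambda_u$ (and $\Lambda_v$). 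The key structural fact, exactly as in \cite{Virta2014}, is that the grouping $(aU_x)_x+(cU_y)_y+(bU_y)_x+(bU_x)_y$ together with the compatibility of $D_2^{(b)}$ with $D_1$ (Lemma~\ref{lemmaMc}) lets the discrete energy contain $\big[\boldsymbol{D_{1ux}}u;\boldsymbol{D_{1uy}}u\big]^T(\Lambda_u\otimes\cdots)\big[\boldsymbol{D_{1ux}}u;\boldsymbol{D_{1uy}}u\big]\ge0$. All the SAT terms in \eqref{SATu}--\eqref{SATv} are designed so that, after multiplication by $u_t^T(H_{ux}\otimes H_{uy})$, the boundary flux $\boldsymbol{H_{uy}}(\Lambda_a\boldsymbol{E_{ux}S_{ux}}+\Lambda_b\boldsymbol{E_{ux}D_{1uy}})u$ and its $v$-counterpart appear with the right signs; using norm-compatibility \eqref{nc} the cross terms in $u_t,v_t$ cancel, leaving only a perfect time-derivative plus the remaining interface quadratic form.

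After these cancellations the total energy decomposes as $\tfrac{d}{dt}(\mathbf{G_1}+\mathbf{G_2}+\mathbf{G_3})=0$, where $\mathbf{G_1}$ collects the manifestly nonnegative kinetic and ``bulk'' stiffness terms (nonnegative because $J_u,J_v>0$, $H>0$, $\widetilde M^{(b)}\succeq0$, and $\Lambda_u,\Lambda_v\succ0$), and $\mathbf{G_2},\mathbf{G_3}$ are the residual interface forms. The next step is to bound $\mathbf{G_2}$ and $\mathbf{G_3}$ from below. Here one uses the borrowing trick of Lemma~\ref{lemmaMb}: instead of borrowing a bare $h\theta$ as in Theorem~\ref{thm1}, we now borrow $h_{ux}\sigma b_m$ from each $M^{(a_i)}$; crucially, since $\delta$ in \eqref{def_delta} is the smallest eigenvalue of $\Lambda_u$ over the whole grid and $a_{ij}-\delta\ge0$, the borrowed amount can be taken uniformly as $h_{ux}\sigma\delta$ (this is the point where $\delta$ rather than the individual $b_m$'s enters). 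Writing the jump $[\![u]\!]:=\boldsymbol{E_{ux}}u-(E_{uv}\otimes I_{v2u})v$ and the boundary-derivative term $w_u:=(\Lambda_a\boldsymbol{E_{ux}S_{ux}}+\Lambda_b\boldsymbol{E_{ux}D_{1uy}})u$, the form $\mathbf{G_2}$ will look schematically like $h_{ux}\sigma\delta\, w_u^T\boldsymbol{H_{uy}}w_u - w_u^T\boldsymbol{H_{uy}}[\![u]\!] + \tfrac{\tau}{2}[\![u]\!]^T\boldsymbol{H_{uy}}[\![u]\!]$, up to controlling $|w_u|$ by $(a_{\max}+b_{\max})$ times the genuine derivative energy. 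A Young/Cauchy--Schwarz inequality on the cross term then shows $\mathbf{G_2}\ge0$ provided $\tau\ge (a_{\max}^2+b_{\max}^2 h_{ux}\sigma)/(2h_{ux}\sigma\delta)$, and symmetrically for $\mathbf{G_3}$; taking the max gives exactly the stated bound. Energy conservation of $\mathbf{G}$ then yields stability.

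The main obstacle I anticipate is the bookkeeping in the second paragraph's first step: verifying that the new two-piece split of the jump penalty (the $-\tfrac{\tau}{2}(E_{ux}\otimes I_{v2u}I_{u2v})u$ piece in \eqref{SATu} together with the $-\tfrac{\tau}{2}\boldsymbol{E_{ux}}u$ piece) produces, after summation over the two blocks and use of \eqref{nc}, precisely the symmetric combination $\tfrac{\tau}{2}[\![u]\!]^T\boldsymbol{H_{uy}}[\![u]\!]$ with no leftover cross terms mixing $u$ and $v$ inside $\mathbf{G_2},\mathbf{G_3}$ — this is exactly the mechanism (already exploited in Theorem~\ref{thm1}) that removes the need for the norm-contracting property, and it must be checked that it survives the presence of the variable coefficients $\Lambda_a,\Lambda_b$ and the curvilinear flux $w_u$. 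A secondary technical point is establishing the uniform estimate $w_u^T\boldsymbol{H_{uy}}w_u \le (\text{const involving }a_{\max},b_{\max})\times(\text{the available bulk derivative energy})$ so that the borrowed term $h_{ux}\sigma\delta\,w_u^T\boldsymbol{H_{uy}}w_u$ genuinely dominates; this relies on $\delta\le a_{ij}$ pointwise on the interface and on compatibility (Lemma~\ref{lemmaMc}) to make the $\boldsymbol{E_{ux}D_{1uy}}u$ contribution controllable by the quadratic form $[\,\boldsymbol{D_{1ux}}u;\boldsymbol{D_{1uy}}u\,]^T(\Lambda_u\otimes H)[\,\cdot\,]$. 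Once these two pieces are in place, the conclusion follows exactly as in the Cartesian case.
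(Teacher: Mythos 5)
Your proposal follows essentially the same route as the paper's Appendix: the energy method, integration by parts that combines the $\overline{D_{2}^{(\cdot)}}$ and mixed--derivative terms into the quadratic form of $\Lambda_u$, norm--compatibility to symmetrize the split jump penalties, and a $\delta$--weighted borrowing at the interface followed by a Young--type inequality (the paper phrases this last step as positive semi--definiteness of a block--diagonal form $x_I^T\boldsymbol{A}x_I$ checked by $4\times 4$ eigenvalue computations per interface node, which is equivalent to your $\mathbf{G_2},\mathbf{G_3}\ge 0$). Two details in your sketch need tightening, and they are exactly the two obstacles you flag. First, the borrowing is not applied to $M^{(a_i)}$ directly: the paper first uses Lemma \ref{lemmaMc} to write $M^{(a_i)}=D_{1}^T\Lambda_{a_i}HD_{1}+R^{(a_i)}$, assembles the full $2\times 2$ coefficient matrix, splits $\Lambda_u=(\Lambda_u-\delta I)+\delta I$, and only then reassembles $\delta D_{1ux}^TH_{ux}D_{1ux}+R^{(\delta)}_{ux}=M^{(\delta)}$ (using $R^{(a)}=R^{(\delta)}+R^{(a-\delta)}$ with both parts positive semi--definite because $a-\delta\ge 0$) and borrows $h_{ux}\sigma\delta$ from $M^{(\delta)}$ via Lemma \ref{lemmaMb} with constant coefficient $\delta$. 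Borrowing from $M^{(a_i)}$ before extracting $D_1^T\Lambda_a HD_1$ would double--count the normal--derivative energy, since the leftover $\widetilde M^{(a_i)}$ is no longer known to be compatible and $R^{(a)}-h\sigma\delta (BS)^TBS$ is not known to be positive semi--definite. Second, the borrowed interface quadratic is in $\boldsymbol{E_{ux}S_{ux}}u$ alone (weight $h_{ux}\sigma\delta$), while a separate quadratic in $\boldsymbol{E_{ux}D_{1uy}}u$ (weight $\delta$) comes from the tangential part of the $\delta I$ split; the cross term between the flux and the jump must therefore be split into its $\Lambda_a\boldsymbol{E_{ux}S_{ux}}u$ and $\Lambda_b\boldsymbol{E_{ux}D_{1uy}}u$ components before applying Young's inequality to each, which is precisely what yields $a_{\max}^2/(2h_{ux}\sigma\delta)+b_{\max}^2/(2\delta)$, i.e.\ the stated bound; a single estimate of the whole flux $w_u$ by $(a_{\max}+b_{\max})$ would not reproduce the correct $h$--dependence of the two terms.
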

\begin{proof}
See Appendix.
\end{proof}

We remark that it is possible to use a penalty parameter that varies on grid points, and such a scheme is proposed in \cite{Virta2014} for problems with conforming interfaces. The penalty parameter $\tau$ used in Theorem \ref{thm2} corresponds to the largest value on all grid points. We also note that when the penalty parameter is chosen to be equal to the stability limit, accuracy deduction has been observed, and proved in some settings by a normal mode analysis \cite{Wang2017JSC}. Though there is no upper bound of $\tau$ for energy stability, a very large penalty parameter increases the spectral radius of the spatial discretization, and leads to a small time step. 

For accuracy, the SBP operators have truncation error $\mathcal{O}(h^{2p})$ in the interior and $\mathcal{O}(h^p)$ near the boundaries and the interface. The interpolation operators constructed in \cite{Kozdon2016,Mattsson2010} have truncation error $\mathcal{O}(h^{2p})$ in the interior of the interface, and truncation error $\mathcal{O}(h^p)$ on a few grid points near the edge of the interface. Therefore, in the semi--discretization \eqref{semi}, the largest truncation error is $\mathcal{O}(h^{p-2})$ introduced by the first three penalty terms in \eqref{SATu} and \eqref{SATv} because of $\tau,\boldsymbol{H_{ux}^{-1}},\boldsymbol{H_{uy}^{-1}}\sim\mathcal{O}(h^{-1})$. The truncation error  $\mathcal{O}(h^{p-2})$ is only localized on a few number of grid points at the corner of two adjacent blocks. According to the accuracy analysis in \cite{Wang2017}, we may expect a rate of convergence $p+1$ of the semi--discretization \eqref{semi}, the same as the scheme developed in \cite{Wang2016}. Note that the $p+1$ convergence rate is one order lower than the expected convergence rate when the grid interface is conforming. If the interpolation error at the edge could be improved to $\mathcal{O}(h^{p+1})$, then the expected rate of convergence would be $p+2$. However, it is proved in \cite{Lundquist} that such norm--compatible interpolation operators cannot be constructed. 

\subsection{Numerical interface treatment of non--conforming blocks}
An example of non--conforming blocks is shown in Figure \ref{NonconformingBlockCur}. The lower left corner of the upper right domain sits in the middle of the right boundary of the left domain. Such an interface configuration is sometimes called a T--junction interface.  

For a T--junction interface, the interface conditions must be imposed on the \emph{glue grid}, which is different from what is usually done for conforming blocks. The technique and the corresponding interpolation operators are constructed in \cite{Kozdon2016}, and are used for a T--junction interface on a Cartesian grid in \cite{Wang2016}. On a curvilinear grid, the discretization is performed in a similar way, but the grid transformation must be done carefully.

Coordinate transformation is performed block--wise, therefore an interface between two blocks is transformed twice. A common strategy is to transform each block in the physical domain to the unit square in the reference domain. This works well with conforming blocks. However, with nonconforming blocks such as in Figure \ref{NonconformingBlockCur} the interfaces are transformed differently in different blocks. As a consequence, the transformed equation must be scaled as explained in \cite{Kozdon2016} to obtain an energy stable scheme. The energy stable semi--discretization can then be constructed in a similar way as in the preceding sections.

\section{Numerical experiments}\label{sec_ex}
In this section, numerical experiments are performed to verify the stability and accuracy property of the numerical schemes developed in this paper. 
The diagonal norm SBP operators used in the numerical experiments can be found in \cite{Strand1994}
for $D_1\approx\partial/\partial x$ and in \cite{Mattsson2012} for $D_2^{(b)}\approx\partial/\partial x (b(x)\partial/\partial x)$. The interpolation operators can be found in \cite{Kozdon2016,Mattsson2010}. The L$_2$ errors are computed as the norm of the difference between the exact solution $u_{ex}$ and the numerical solution $u_h$ according to 
\begin{equation*}
\|u_{ex}-u_h\|_{\text{L}_2}=\sqrt{h_xh_y(u_{ex}-u_h)^T (u_{ex}-u_h)},
\end{equation*}
where $h_x$ and $h_y$ are the mesh size in the $x$ and $y$  spatial direction, respectively. 

\subsection{An extreme interface}
We consider the wave equation on the domain $[-1,1]\times[0,1]$, separated by the interface $x = 4\sin(7\pi y)/5$. The domain and mesh are depicted in Figure \ref{domain_d} and \ref{domain_d_mesh}, respectively. The aim of this experiment is to verify that the scheme is stable even when an interface with a large curvature is present in the domain, but not to test accuracy or convergence rate. As can be seen in Figure  \ref{domain_d_mesh}, the mesh is of bad quality due to large distortion.
\begin{figure}
     \centering
     \subfloat[]{\includegraphics[width=0.45\textwidth]{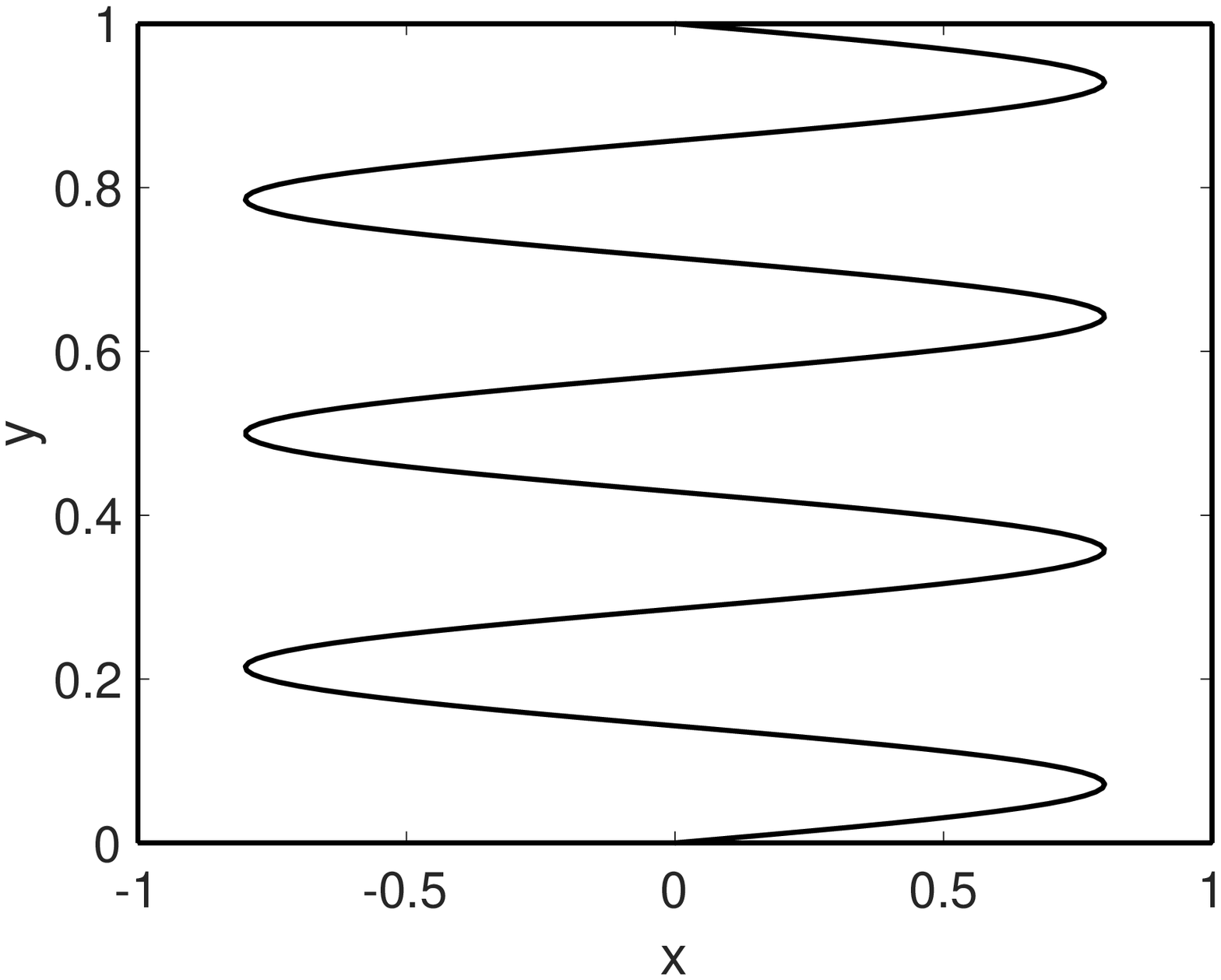}\label{domain_d}}
     \subfloat[]{\includegraphics[width=0.45\textwidth]{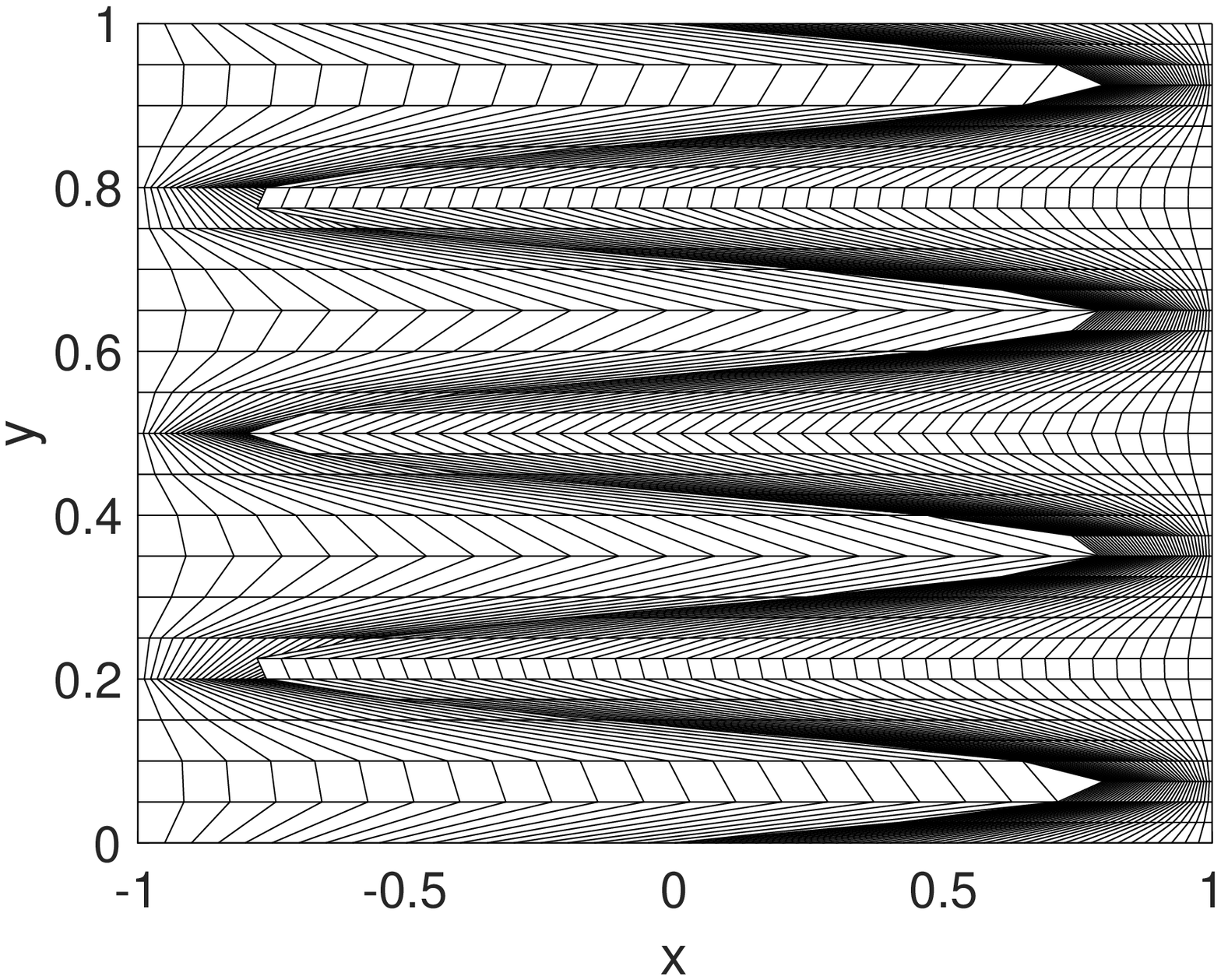}\label{domain_d_mesh}}
     \caption{(a) A composite domain with an extreme interface (b) Mesh}
\end{figure}

The wave equation is discretized by the sixth order SBP operators in each subdomain, and patched together by the SAT method using the sixth order interpolation operators \cite{Mattsson2010}. The semi--discretization can be written as a system of ordinary differential equations 
\begin{equation*}
w_{tt} = \boldsymbol{D} w + F,
\end{equation*}
where $w$ is the numerical solution, $\boldsymbol{D}$ is the spatial discretization operator including boundary and interface terms, and $F$ corresponds to the forcing function and boundary data evaluated on the grid. 

First, we use $21\times 21$ grid points in the left domain and $41\times 41$ grid points in the right domain, and perform an eigenvalue analysis. 
Stability requires that all the eigenvalues of $\boldsymbol{D}$ are real and non--positive. In Figure \ref{eigD}, we plot the eigenvalues of $\boldsymbol{D}$ multiplied by the square of the mesh size in the right domain, denoted by $\lambda$, and observe that they are indeed real and non--positive. 

Next, we test the scheme with a much finer mesh, $101\times 101$ grid points in the left domain and $201\times 201$ grid points in the right domain. Instead of an eigenvalue analysis, we perform a long time simulation by using the manufactured solution 
\begin{equation*}
U = \cos(x+1)\cos(y+2)\cos(\sqrt{2}t+3),
\end{equation*}
for initial and Neumann boundary data. We choose the classical Runge--Kutta method as the time integrator, and let the wave propagate for ten temporal periods. The L$_2$ error at each time step is plotted in Figure \ref{error_time_series}. We observe that the L$_2$ error is bounded in time.

\begin{figure}
     \centering
     \subfloat[]{\includegraphics[width=0.45\textwidth]{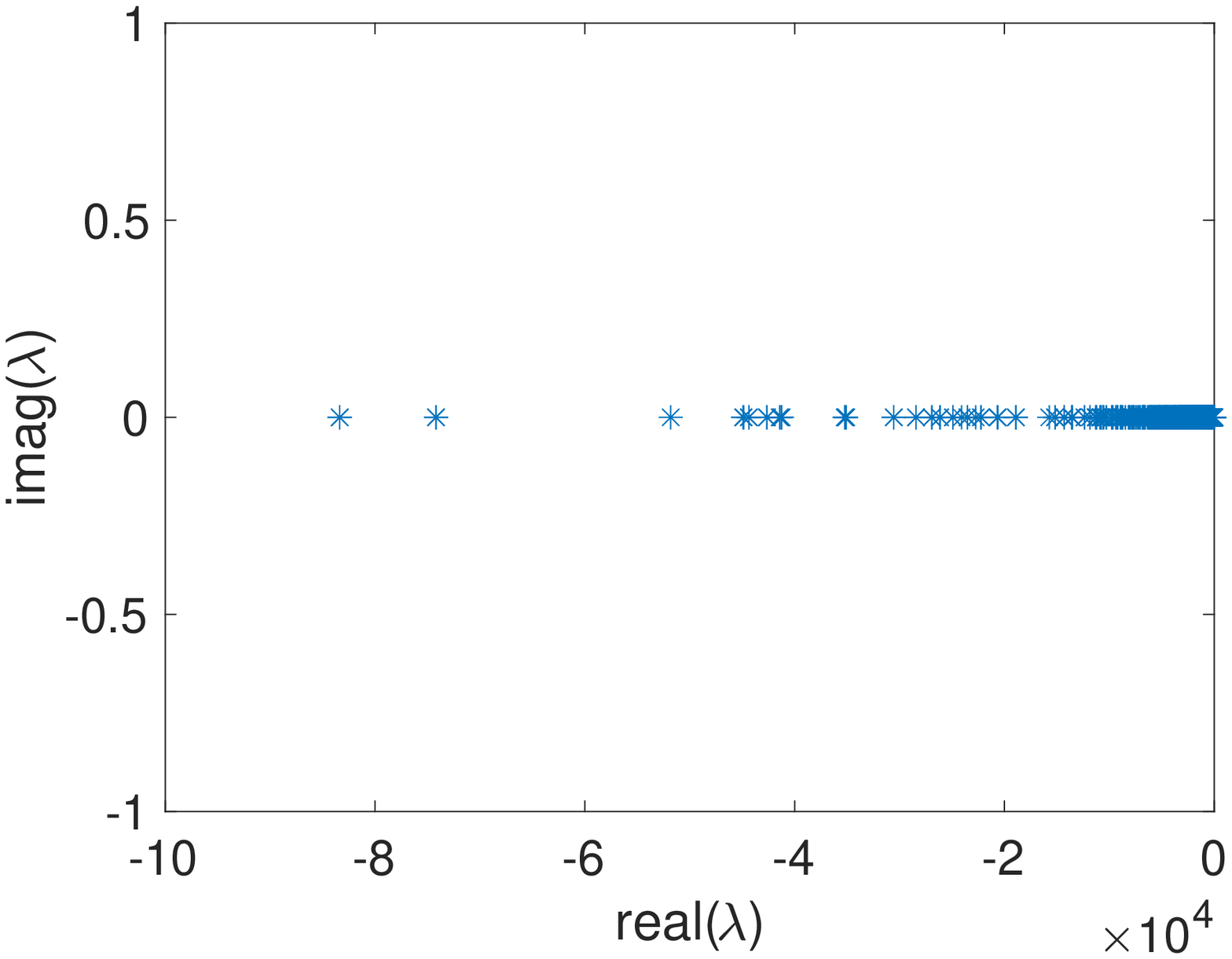}\label{eigD}}
     \subfloat[]{\includegraphics[width=0.45\textwidth]{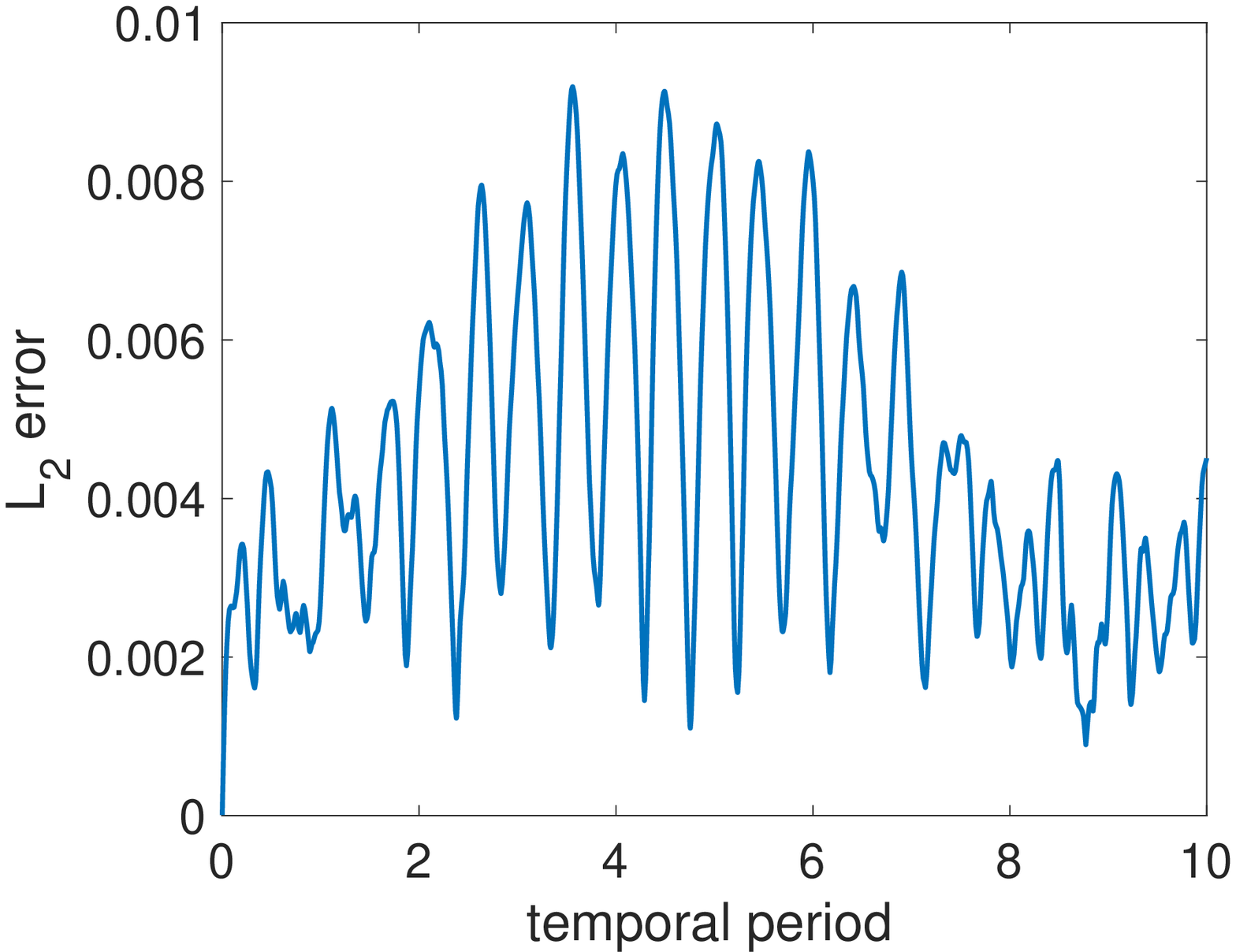}\label{error_time_series}}
     \caption{(a) Eigenvalues of the spatial discretization operator (b) L$_2$ error in ten temporal periods}
\end{figure}

\subsection{A T--junction interface}
We consider the same domain $[-1,1]\times [0,1]$ as in the previous experiment, but with interfaces depicted in Figure \ref{NonconformingBlockCur}. The interface in the vertical direction is defined by $x=\sin(3\pi y/2)/3$. The intersection point $(\bar{x},\bar{y})$ of the two interfaces is chosen by letting $\bar{y}=0.621$. The interface in the horizontal direction is defined by $y=\sin(\pi x/2)/5+\bar{y}-\sin(\pi \sin(3\pi\bar{y}/2)/6)/5$. The numbers of grid points in the left, lower right and upper right domain are $26\times 52, 26\times 26$ and $51\times 26$, respectively. Both the blocks and interfaces are non--conforming, see a close--up in Figure \ref{closeup}. When refining the mesh, the number of grid points is doubled in each spatial direction in each domain. 

To test accuracy and rate of convergence, we use the manufactured solution  
\begin{equation}\label{manufactured}
U = \cos(3\pi x+1) \cos(4\pi y+2) \cos(5\pi t+3).
\end{equation}
to obtain initial and Neumann boundary data, and propagate the wave until $t=2$. With this analytical solution, there is no forcing term in the equation. 

We solve the equation by the fourth and sixth order SBP--SAT finite difference method, and use the classical Runge--Kutta method to integrate in time. Since the interface in this experiment is a T--junction, we use the fourth and sixth order accurate interpolation operators constructed in \cite{Kozdon2016}. The time step is chosen small enough so that the error in the solution is determined by the spatial discretization. The errors in L$_2$ norm are shown in Figure \ref{result_t}, and the associated rates of convergence are given at the end of each error plot. In the figure, we use \emph{new SAT} as the legend to denote the results obtained by the scheme in this paper, and \emph{old SAT} to denote the result obtained by the scheme in \cite{Wang2016}.  The $x$--axis label $N$ is the number of grid points in the $x$--direction in the left domain.

We observe that the fourth and sixth order accurate scheme lead to third and fourth order convergence rate, respectively. This agrees well with the accuracy discussion in the end of Section \ref{sec-conforming} in this paper. We note that the sixth order method gives much smaller error than the fourth order methods. In addition, the four order method developed in this paper gives a smaller error than the fourth order method in \cite{Wang2016}. We have also performed an experiment with the sixth order method in \cite{Wang2016} and the numerical solution quickly blows up, indicating that method is unstable. This is not surprising because the energy analysis in \cite{Wang2016} requires the \emph{norm--compatible} condition of the interface operators, which are not satisfied by the sixth order operators.

\begin{figure}
     \centering
     \subfloat[]{\includegraphics[width=0.45\textwidth]{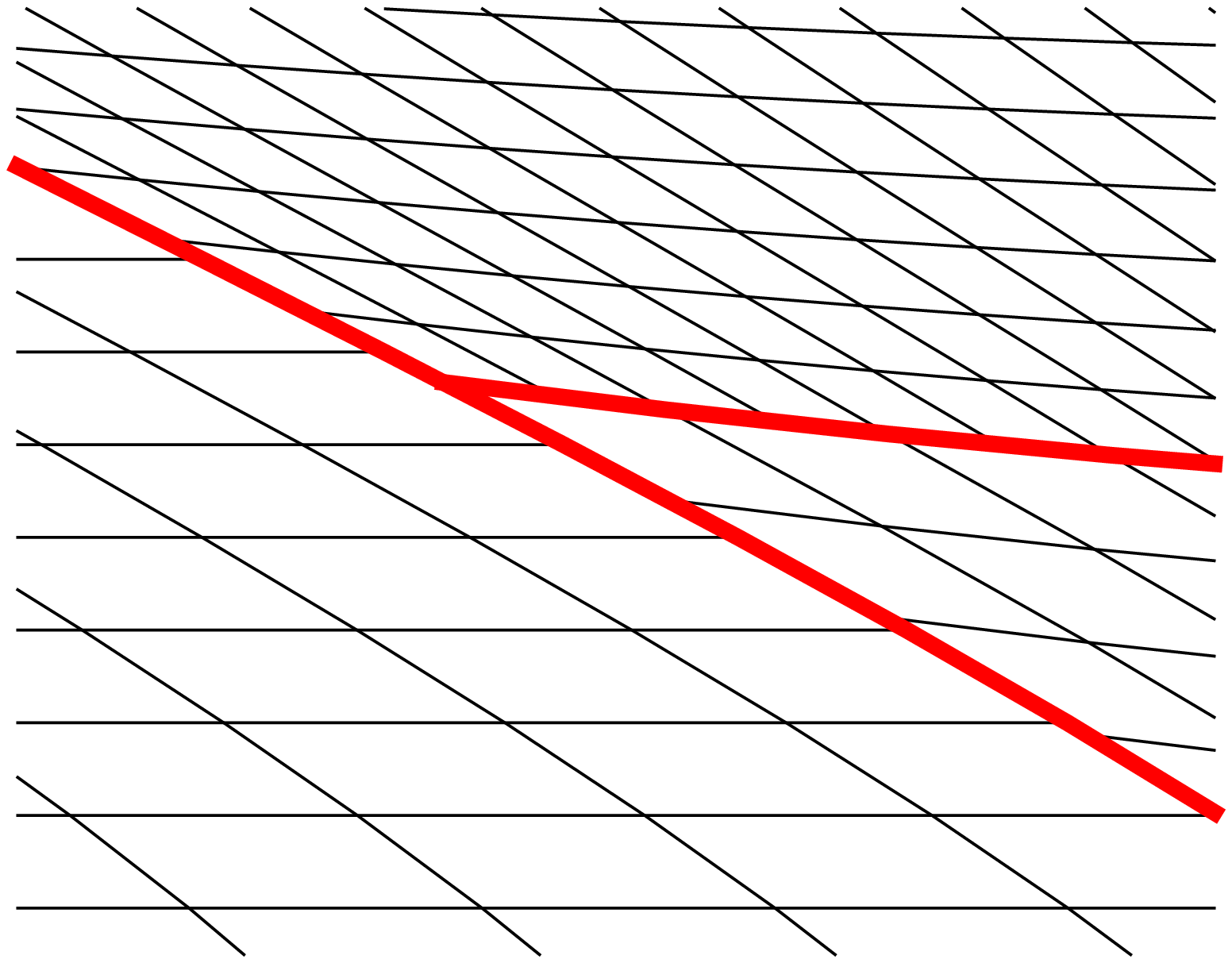}\label{closeup}}
     \subfloat[]{\includegraphics[width=0.45\textwidth]{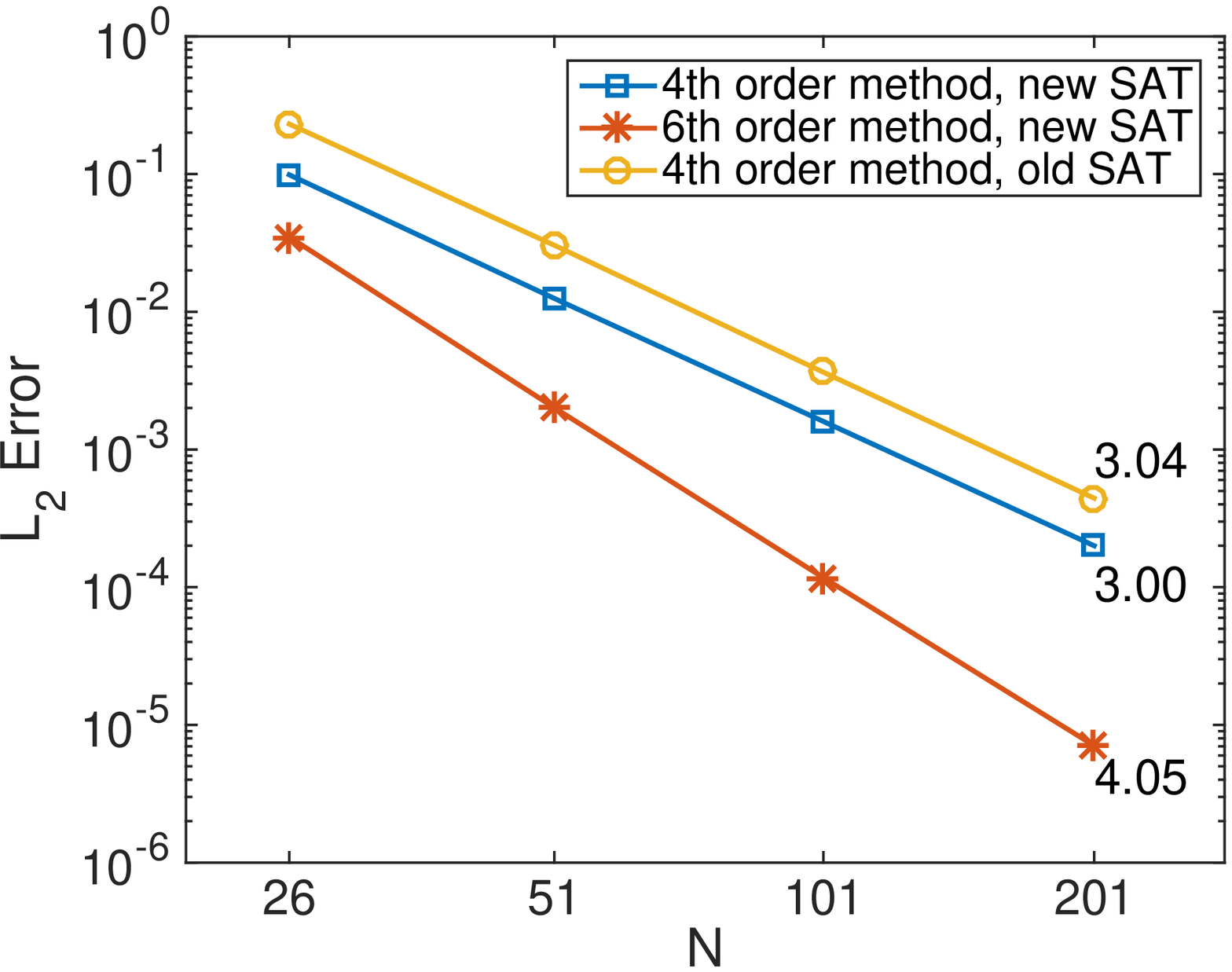}\label{result_t}}
     \caption{(a) A close--up of the interfaces (b) Rate of convergence}
     \label{ConformingBlocks}
\end{figure}

\section{Conclusion}
We use the SBP--SAT finite difference method to solve the wave equation on a composite domain. The domain is divided by curved interfaces resulting in non--conforming blocks, and the grid is constructed in each block independently resulting in non--conforming grid interfaces. We develop new penalty terms to patch the blocks together by the SAT method. This extends the provably stable scheme from fourth order accuracy \cite{Wang2016}  to sixth order accuracy. Numerical experiments demonstrate the superiority of the new sixth order accurate scheme. In addition, we find that the new fourth order accurate scheme is more accurate than the fourth order accurate scheme in \cite{Wang2016}.

We note that eighth order and tenth order interpolation operators are constructed in \cite{Kozdon2016}, and can potentially be incorporated into the developed scheme in this paper. However, higher than sixth order accurate SBP operators for second derivative with variable coefficient have not yet been constructed. 

\section*{Acknowledgement}
The author would like to thank Professor Gunilla Kreiss at Uppsala University for the fruitful discussion and support.

\section*{Appendix}
Stability is proved by the energy method, starting with multiplying the two semi--discretized equations in \eqref{semi} from the left by $u_t^T \boldsymbol{H_u}$ and $v_t^T \boldsymbol{H_v}$, respectively, where  $\boldsymbol{H_u}:=H_{ux}\otimes H_{uy}$ and $\boldsymbol{H_v}:=H_{vx}\otimes H_{vy}$. In the following derivation, we focus on the energy contribution from the first equation in \eqref{semi} as the energy contribution from the second equation in \eqref{semi}  can be computed in a similar way. 

First, we consider the energy contribution from the penalty term $SAT_u$:
\small
\begin{align}\label{p_SAT_u}
& u_t^T \boldsymbol{H_u} SAT_u \nonumber \\
= & \frac{1}{2} u_t^T ({\Lambda_a}\boldsymbol{E_{ux}S_{ux}}+{\Lambda_b}\boldsymbol{E_{ux}D_{1uy}})^T\boldsymbol{H_{uy}}(\boldsymbol{E_{ux}}u-(E_{uv}\otimes I_{v2u})v) \nonumber\\
&-\frac{\tau}{2} u_t^T \boldsymbol{H_{uy}}((E_{ux}\otimes (I_{v2u}I_{u2v}))u-(E_{uv}\otimes I_{v2u})v)\\
&-\frac{\tau}{2} u_t^T \boldsymbol{H_{uy}}(\boldsymbol{E_{ux}}u-(E_{uv}\otimes I_{v2u})v) \nonumber \\
&-\frac{1}{2} u_t^T \boldsymbol{H_{uy}}(({\Lambda_a}\boldsymbol{E_{ux}S_{ux}}+{\Lambda_b}\boldsymbol{E_{ux}D_{1uy}})u-(E_{uv}\otimes I_{v2u})({\Lambda_\alpha}\boldsymbol{E_{vx}S_{vx}}+{\Lambda_\beta}\boldsymbol{E_{vx}D_{1vy}})v). \nonumber
\end{align}
\normalsize
In the final energy estimate, we expect to see that the discrete energy is conserved in time.  We note that the first part of the third term in  \eqref{p_SAT_u} can be written as the time derivative of a quadratic term
\begin{equation*}
-\frac{\tau}{2} u_t^T \boldsymbol{H_{uy}} \boldsymbol{E_{ux}}u = -\frac{\tau}{4}\frac{d}{dt} ((\boldsymbol{E_{ux}}u)^T \boldsymbol{H_{uy}} \boldsymbol{E_{ux}}u).
\end{equation*}
With a positive $\tau$, the above term contributes positively to the discrete energy in terms of $\boldsymbol{E_{ux}}u$. By using the \emph{norm--compatible} property of the interpolation operators, we find that the first part of the second term in \eqref{p_SAT_u} can also be written as the time derivative of a quadratic term
\footnotesize
\begin{equation*}
-\frac{\tau}{2} u_t^T \boldsymbol{H_{uy}} (E_{ux}\otimes (I_{v2u}I_{u2v}))u = -\frac{\tau}{4}\frac{d}{dt} (((E_{ux}\otimes I_{u2v}) u)^T \boldsymbol{H_{vy}} (E_{ux}\otimes I_{u2v}) u),
\end{equation*}
\normalsize
which contributes to the energy positively in terms of $(E_{ux}\otimes I_{u2v})u$. We observe that in \eqref{p_SAT_u}, there are also terms $\boldsymbol{E_{ux}S_{ux}}u$ and $\boldsymbol{E_{ux}D_{1uy}}u$. Therefore, we need the corresponding positive energy contributions, which come from the SBP operators as shown below.

The energy contribution from the two mixed--derivative terms is
\begin{align}\label{mixed_e}
 &u_t^T \boldsymbol{H_u}\boldsymbol{D_{1ux}}\Lambda_b\boldsymbol{D_{1uy}}u+ u_t^T \boldsymbol{H_u}\boldsymbol{D_{1uy}}\Lambda_b\boldsymbol{D_{1ux}}u\\
 =& -u_t^T \boldsymbol{D_{1ux}^T} \boldsymbol{H_{u}}\Lambda_b\boldsymbol{D_{1uy}}u -u_t^T \boldsymbol{D_{1uy}^T}\boldsymbol{H_{u}} \Lambda_b\boldsymbol{D_{1ux}}u +u_t^T\boldsymbol{E_{ux}} \boldsymbol{H_{uy}}\Lambda_b \boldsymbol{D_{1uy}}u. \nonumber
\end{align}
Note that we have used the equality (\ref{D1_eq}) for $\boldsymbol{D_{1ux}}$ and $\boldsymbol{D_{1uy}}$, and have excluded boundary terms that do not correspond to the interface. On the right--hand side of \eqref{mixed_e}, the first two terms are volume terms, involving the numerical solution $u$ in the entire domain $\Omega_u$; the third term is an interface term, involving $u$ on the interface.

The energy contribution from the SBP approximation is
\begin{align}\label{sbp_e}
&u_t^T  \boldsymbol{H_u} (\overline{D_{2ux}^{(a)}}+\overline{D_{2uy}^{(c)}})u \nonumber\\
=& -u_t^T\boldsymbol{D_{1ux}^T} \boldsymbol{H_{u}}{\Lambda_a} \boldsymbol{D_{1ux}}u - u_t^T  \boldsymbol{H_{uy}}  \overline{R^{(a)}_{ux}}u + u_t^T \boldsymbol{H_{uy}} {\Lambda_a} \boldsymbol{E_{ux}S_{ux}}u \\
& -u_t^T \boldsymbol{D_{1uy}^T} \boldsymbol{H_{u}}{\Lambda_c}\boldsymbol{D_{1uy}}u - u_t^T  \boldsymbol{H_{ux}}  \overline{R^{(c)}_{uy}}u, \nonumber
\end{align} 
where the two remainder terms are $\overline{R^{(a)}_{ux}} = \sum_{i=1}^{n_{uy}} {R^{(a_i)}_{ux}}\otimes E^i_{uy}$ and $\overline{R^{(c)}_{uy}} = \sum_{i=1}^{n_{ux}} {R^{(c_i)}_{uy}}\otimes E^i_{ux}$. On the right--hand side of \eqref{sbp_e}, the third is an interface term, while the others are volume terms. For the volume terms in \eqref{mixed_e} and \eqref{sbp_e}, we have
\begin{align}
&-u_t^T \boldsymbol{D_{1ux}^T} \boldsymbol{H_{u}}\boldsymbol{\Lambda_b}\boldsymbol{D_{1uy}}u -u_t^T \boldsymbol{D_{1uy}^T}\boldsymbol{H_{u}} \boldsymbol{\Lambda_b}\boldsymbol{D_{1ux}}u -u_t^T\boldsymbol{D_{1ux}^T} \boldsymbol{H_{u}}\boldsymbol{\Lambda_a} \boldsymbol{D_{1ux}}u\nonumber \\
 & -u_t^T \boldsymbol{D_{1uy}^T} \boldsymbol{H_{u}}\boldsymbol{\Lambda_c}\boldsymbol{D_{1uy}}u - u_t^T  \boldsymbol{H_{uy}}  \overline{R^{(a)}_{ux}}u - u_t^T  \boldsymbol{H_{ux}}  \overline{R^{(c)}_{uy}} u \nonumber\\
 =&-\begin{bmatrix}
  \boldsymbol{D_{1ux}}u_t \\
  \boldsymbol{D_{1uy}}u_t
  \end{bmatrix}^T
  \begin{bmatrix}
  \boldsymbol{H_u} & \\
  & \boldsymbol{H_u}
  \end{bmatrix}
    \begin{bmatrix}
 {\Lambda_a} & { \Lambda_b} \\
   {\Lambda_b} &  {\Lambda_c}
  \end{bmatrix}
  \begin{bmatrix}
  \boldsymbol{D_{1ux}}u \\
  \boldsymbol{D_{1uy}}u
  \end{bmatrix}-u_t^T  \boldsymbol{H_{uy}}  \overline{R^{(a)}_{ux}} u - u_t^T  \boldsymbol{H_{ux}}  \overline{R^{(c)}_{uy}} u.\nonumber
\end{align} 

Next, we split $\begin{bmatrix}
{\Lambda_a} & { \Lambda_b} \\
   {\Lambda_b} &  {\Lambda_c}
  \end{bmatrix}$ into two parts, and obtain

\begin{align}
&-u_t^T \boldsymbol{D_{1ux}^T} \boldsymbol{H_{u}}\boldsymbol{\Lambda_b}\boldsymbol{D_{1uy}}u -u_t^T \boldsymbol{D_{1uy}^T}\boldsymbol{H_{u}} \boldsymbol{\Lambda_b}\boldsymbol{D_{1ux}}u -u_t^T\boldsymbol{D_{1ux}^T} \boldsymbol{H_{u}}\boldsymbol{\Lambda_a} \boldsymbol{D_{1ux}}u\nonumber \\
 & -u_t^T \boldsymbol{D_{1uy}^T} \boldsymbol{H_{u}}\boldsymbol{\Lambda_c}\boldsymbol{D_{1uy}}u - u_t^T  \boldsymbol{H_{uy}}  \overline{R^{(a)}_{ux}}u - u_t^T  \boldsymbol{H_{ux}}  \overline{R^{(c)}_{uy}} u \nonumber\\
  =&-\begin{bmatrix}
  \boldsymbol{D_{1ux}}u_t \\
  \boldsymbol{D_{1uy}}u_t
  \end{bmatrix}^T
  \begin{bmatrix}
  \boldsymbol{H_u} & \\
  & \boldsymbol{H_u}
  \end{bmatrix}
    \left(\begin{bmatrix}
  {\Lambda_a} & { \Lambda_b} \\
   {\Lambda_b} &  {\Lambda_c}
  \end{bmatrix}-\delta \boldsymbol{I}\right)
  \begin{bmatrix}
  \boldsymbol{D_{1ux}}u \\
  \boldsymbol{D_{1uy}}u
  \end{bmatrix} \label{abc1}\\ 
  &-\delta\begin{bmatrix}
  \boldsymbol{D_{1ux}}u_t \\
  \boldsymbol{D_{1uy}}u_t
  \end{bmatrix}^T
  \begin{bmatrix}
  \boldsymbol{H_u} & \\
  & \boldsymbol{H_u}
  \end{bmatrix}
  \begin{bmatrix}
  \boldsymbol{D_{1ux}}u \\
  \boldsymbol{D_{1uy}}u
  \end{bmatrix}-u_t^T  \boldsymbol{H_{uy}}  \overline{R^{(a)}_{ux}} u - u_t^T  \boldsymbol{H_{ux}}  \overline{R^{(c)}_{uy}} u,\nonumber
\end{align} 
where $\boldsymbol{I}$ is an identity operator, and $\delta$ is defined in \eqref{def_delta}. The first term in \eqref{abc1} is the time derivative of a non--positive quantity.  From the second term in \eqref{abc1}, we need to get quadratic terms for both $\boldsymbol{E_{ux}D_{1ux}}u$ and $\boldsymbol{E_{ux}S_{ux}}u$ on the interface. We write
\begin{align}
&-\delta\begin{bmatrix}
  \boldsymbol{D_{1ux}}u_t \\
  \boldsymbol{D_{1uy}}u_t
  \end{bmatrix}^T
  \begin{bmatrix}
  \boldsymbol{H_u} & \\
  & \boldsymbol{H_u}
  \end{bmatrix}
  \begin{bmatrix}
  \boldsymbol{D_{1ux}}u \\
  \boldsymbol{D_{1uy}}u
  \end{bmatrix}-u_t^T  \boldsymbol{H_{uy}}  \overline{R^{(a)}_{ux}} u - u_t^T  \boldsymbol{H_{ux}}   \overline{R^{(c)}_{uy}} u\nonumber\\
  =& -\delta (\boldsymbol{D_{1ux}}u_t)^T \boldsymbol{H_u}  (\boldsymbol{D_{1ux}}u) -u_t^T  \boldsymbol{H_{uy}}  \overline{R^{(a)}_{ux}} u \nonumber \\
  &-\delta (\boldsymbol{D_{1uy}}u_t)^T \boldsymbol{H_u}  (\boldsymbol{D_{1uy}}u) -u_t^T  \boldsymbol{H_{ux}}  \overline{R^{(c)}_{uy}} u\nonumber \\
  =&-u_t^T (( \delta D_{1ux}^T H_{ux} D_{1ux}+R^{(\delta)}_{ux} )\otimes H_{uy} )u - u_t^T  \boldsymbol{H_{uy}}  \overline{R^{(a-\delta)}_{ux}}  u\nonumber\\
  &-\delta (\boldsymbol{E_{ux}D_{1uy}}u_t)^T \boldsymbol{H_u}  (\boldsymbol{E_{ux}D_{1uy}}u) -u_t^T  \boldsymbol{H_{ux}}  \overline{R^{(c)}_{uy}}u\nonumber\\
  &-\delta ((\boldsymbol{I_{ux}} -\boldsymbol{E_{ux}} ) \boldsymbol{D_{1uy}}u_t)^T \boldsymbol{H_u}  (\boldsymbol{I_{ux}} -\boldsymbol{E_{ux}} )\boldsymbol{D_{1uy}}u \nonumber\\
  =&-u_t^T (M^{(\delta)}\otimes H_{uy}) u - u_t^T  \boldsymbol{H_{uy}}  \overline{R^{(a-\delta)}_{ux}}  u \nonumber\\
  &-\delta (\boldsymbol{E_{ux}D_{1uy}}u_t)^T \boldsymbol{H_u}  (\boldsymbol{E_{ux}D_{1uy}}u) -u_t^T  \boldsymbol{H_{ux}}  \overline{R^{(c)}_{uy}}u\label{nct}\\
  &-\delta ((\boldsymbol{I_{ux}} -\boldsymbol{E_{ux}} ) \boldsymbol{D_{1uy}}u_t)^T \boldsymbol{H_u}  (\boldsymbol{I_{ux}} -\boldsymbol{E_{ux}} )\boldsymbol{D_{1uy}}u \nonumber \\
    =&-u_t^T (\widetilde M^{(\delta)}\otimes H_{uy}) u - h_{ux}\sigma\delta (\boldsymbol{E_{ux}S_{ux}}u_t)^T\boldsymbol{H_{uy}E_{ux}S_{ux}}u - u_t^T  \boldsymbol{H_{uy}}   \overline{R^{(a-\delta)}_{ux}}  u\nonumber \\
  &-\delta (\boldsymbol{E_{ux}D_{1uy}}u_t)^T \boldsymbol{H_{uy}}  (\boldsymbol{E_{ux}D_{1uy}}u) -u_t^T  \boldsymbol{H_{ux}}  \overline{R^{(c)}_{uy}}u\label{bt}\\
  &-\delta ((\boldsymbol{I_{ux}} -\boldsymbol{E_{ux}} ) \boldsymbol{D_{1uy}}u_t)^T \boldsymbol{H_u}  (\boldsymbol{I_{ux}} -\boldsymbol{E_{ux}} )\boldsymbol{D_{1uy}}u.\nonumber 
\end{align} 
Note that in the above derivation, we use Lemma \ref{lemmaMc} to obtain \eqref{nct}, and Lemma \ref{lemmaMb} to obtain \eqref{bt}. We have obtained both the time derivative of quadratic terms for $\boldsymbol{E_{ux}S_{ux}}u$ and $\boldsymbol{E_{ux}D_{1ux}}u$. 

After a very similar derivation of energy contribution for the second equation in \eqref{semi}, we move all terms to one side and write it in the form $\frac{d}{dt}\boldsymbol{G}=0$. The final step of the energy analysis is to determine the penalty parameter $\tau$ so that $\boldsymbol{G}$ is a discrete energy satisfying $\boldsymbol{G}\geq 0$. With some algebraic calculations, we may write the energy contribution from all interface terms as $x_I^T \boldsymbol{A}x_I$, where $x_I$ is the vector in the form
\begin{align*}
x_I=\begin{bmatrix}\boldsymbol{E_{ux}}u; \boldsymbol{E_{ux}S_{ux}}u; \boldsymbol{E_{ux}D_{1ux}}u; (E_{uv}\otimes I_{v2u})v \\
\boldsymbol{E_{vx}}v; \boldsymbol{E_{vx}S_{vx}}v; \boldsymbol{E_{vx}D_{1vx}}v; (E_{vu}\otimes I_{u2v})u \end{bmatrix}.
\end{align*} 
The matrix $\boldsymbol{A}$ is a block matrix in the form $\boldsymbol{A}=\begin{bmatrix} \boldsymbol{A_1}, &\boldsymbol{0}, \\
\boldsymbol{0}, & \boldsymbol{A_2}
\end{bmatrix}$, where
\begin{align*}
\boldsymbol{A_1} =-\begin{bmatrix}
-\frac{\tau}{4}\boldsymbol{H_{uy}}, & \frac{1}{4}\boldsymbol{H_{uy}}\Lambda_{a}, &  \frac{1}{4}\boldsymbol{H_{uy}}\Lambda_{b}, &  \frac{\tau}{4}\boldsymbol{H_{uy}}\\
 \frac{1}{4}\boldsymbol{H_{uy}}\Lambda_{a}, &-\frac{h_{ux}\sigma\delta}{2}\boldsymbol{H_{uy}}, & 0,& -\frac{1}{4}\boldsymbol{H_{uy}}\Lambda_{a} \\
  \frac{1}{4}\boldsymbol{H_{uy}}\Lambda_{b}, &0,  & -\frac{\delta}{2}\boldsymbol{H_{uy}}, & -\frac{1}{4}\boldsymbol{H_{uy}}\Lambda_{b}\\
  \frac{\tau}{4}\boldsymbol{H_{uy}}, & -\frac{1}{4}\boldsymbol{H_{uy}}\Lambda_{a}, & -\frac{1}{4}\boldsymbol{H_{uy}}\Lambda_{b}, & -\frac{\tau}{4}\boldsymbol{H_{uy}}
  \end{bmatrix},
  \end{align*} 
\begin{align*}
\boldsymbol{A_2} =-\begin{bmatrix}
-\frac{\tau}{4}\boldsymbol{H_{vy}}, & -\frac{1}{4}\boldsymbol{H_{vy}}\Lambda_{\alpha}, &  -\frac{1}{4}\boldsymbol{H_{vy}}\Lambda_{\beta}, &  \frac{\tau}{4}\boldsymbol{H_{vy}}\\
 -\frac{1}{4}\boldsymbol{H_{vy}}\Lambda_{\alpha}, &-\frac{h_{vx}\sigma\delta}{2}\boldsymbol{H_{vy}}, & 0,&  \frac{1}{4}\boldsymbol{H_{vy}}\Lambda_{\alpha}\\
  -\frac{1}{4}\boldsymbol{H_{vy}}\Lambda_{\beta}, &0,  & -\frac{\delta}{2}\boldsymbol{H_{vy}}, & \frac{1}{4}\boldsymbol{H_{vy}}\Lambda_{\beta} \\
\frac{\tau}{4}\boldsymbol{H_{vy}}, & \frac{1}{4}\boldsymbol{H_{vy}}\Lambda_{\alpha}, & \frac{1}{4}\boldsymbol{H_{vy}}\Lambda_{\beta}, & -\frac{\tau}{4}\boldsymbol{H_{vy}}
\end{bmatrix}.
\end{align*} 
We note that $\boldsymbol{A_1}$ is symmetric, and each submatrix of $\boldsymbol{A_1}$ is a diagonal matrix of dimension $n_{uy}$. Therefore, we can write $\boldsymbol{A_1}$ as a sum of $n_{uy}$ matrices, where the $i^{th}$ matrix takes the $i^{th}$ diagonal element of each submatrix with all the other elements zero. We then compute the eigenvalues of each matrix, and require them to be non--negative. By considering $\boldsymbol{A_2}$ in the same way, we obtain the limit on the penalty parameter for which $\boldsymbol{A}$ is positive semi--definite
\begin{equation*}
\tau\geq \max \left(\frac{a_{\max}^2+b_{\max}^2h_{ux}\sigma}{2h_{ux}\sigma\delta},\frac{\alpha_{\max}^2+\beta_{\max}^2h_{vx}\sigma}{2h_{vx}\sigma\delta}   \right),
\end{equation*}
where $a_{\max}$, $b_{\max}$, $\alpha_{\max}$ and $\beta_{\max}$ are the maximum of functions $a(x,y)$, $b(x,y)$, $\alpha(x,y)$ and $\beta(x,y)$ evaluated on the interface, respectively.

\end{document}